\newcommand{\diag}{\mathop{\mathrm{diag}}}
\newcommand{\argmin}{\mathop{\mathrm{arg} ~\mathrm{min}}}
\newcommand{\R}{{\mathbb{R}}}
\newtheorem{hypothesis}{Hypothesis}
\DeclareMathOperator{\Tr}{Tr}
\newtheorem{theorem}{Theorem}[section]
\newtheorem{lemma}[theorem]{Lemma}
\newtheorem{remark}{Remark}
\begin{document}

\begin{frontmatter}

\title{Bi-level algorithm for optimizing hyperparameters in penalized nonnegative matrix factorization}


\author[inst1]{Nicoletta Del Buono}
\author[inst1]{Flavia Esposito}
\author[inst1]{Laura Selicato\corref{mycorrespondingauthor}}
\cortext[mycorrespondingauthor]{Corresponding author}

\affiliation[inst1]{organization={Department of Mathematics, University of Bari Aldo Moro},
            addressline={Via Orabona 4},
            city={Bari},
            postcode={70125}, 
            country={Italy}}
\ead{laura.selicato@uniba.it}

\author[inst2]{Rafał Zdunek}

\affiliation[inst2]{organization={Faculty of Electronics, Photonics, and Microsystems, Wroclaw University of Science and Technology}, 
            addressline={27 Wybrzeze Wyspianskiego st.},
            city={Wrocław},
            postcode={50370}, 
            country={Poland}}

\begin{abstract}

Learning approaches rely on hyperparameters that impact the algorithm's performance and affect the knowledge extraction process from data. Recently, Nonnegative Matrix Factorization (NMF) has attracted a growing interest as a learning algorithm. This technique captures the latent information embedded in large datasets while preserving feature properties. NMF can be formalized as a penalized optimization task in which tuning the penalty hyperparameters is an open issue. The current literature does not provide any general framework addressing this task. This study proposes to express the penalty hyperparameters problem in NMF in terms of a bi-level optimization. We design a novel algorithm, named Alternating Bi-level (AltBi), which incorporates the hyperparameters tuning procedure into the updates of NMF factors. Results of the existence and convergence of numerical solutions, under appropriate assumptions, are studied, and numerical experiments are provided.

\end{abstract}

\begin{keyword}
Nonnegative Matrix Factorization,  Hyperparameter Optimization, Penalty coefficient, Low-rank approximation
\MSC[2010] 
15A23 \sep 65K10 \sep 65F55 \sep 68Q32 \sep 68V20 \sep 90C46 \sep 46N10
\end{keyword}
\end{frontmatter}

\section{Introduction}
All learning models require setting some hyperparameters
(HPs){\let\thefootnote\relax\footnote{{Abbreviations - HP: Hyperparameter - HPO: Hyperparameter Optimization - \\GB: Gradient-based - MU: Multiplicative Updates - 
RMD: Reverse-Mode Differentiation - 
FMD: Forward-Mode Differentiation -
P-MU: Penalized Multiplicative Update.}}}-- variables governing the learning approach -- before starting the learning process from data.
HPs tuning requires a substantial effort, depending on the user, and affects the learner's performance~\cite{k2}. Automatic Hyperparameter Optimization (HPO) would bring a solution to these problems~\cite{bes14}.

HPO strategies commonly used in the literature range from simple methods, such as the grid or random search, to more complex ones, such as the Bayesian optimization or the Genetic Algorithms (GAs)~\cite{bergstra2012random,Francescomarino2018GeneticAF,bergstra2011algorithms,Marinov2019HyperparameterOW,Hussain_et_al2021,Guoxin_Yong_BayesianHPO2020}. Grid search explores a prescribed set of HPs in a given search space, while random search defines a random sampling of HPs without any assumption on the search space. Both these strategies are time-consuming since they are driven by some performance metrics, commonly measured by cross-validation. Moreover, they require domain experts to justify a search space that is meaningful for the application domain. Bayesian optimization attempts to predict how unseen combinations of HPs will perform based on a so-called surrogate model that approximates the HPO problem. GAs are based on stochastic optimization and are inspired by the biological phenomena of natural evolution.  Recently, some works proved that Gradient-Based (GB) approaches can obtain great results in HPO for large-scale problems, using only local information and at least one HP (learning rate) \cite{nostro,6789800}.
GB methods reduce the validation error, computing or approximating the gradient with respect to HPs~\cite{bott98,Bott3,Mc}. One of the ways to go through GB methods for HPO is to formalize the problem as a bi-level task~\cite{fra2,Pedro,del2021toward}. Bi-level programming solves an outer optimization problem subject to the optimality of an inner optimization one~\cite{bard2013practical}.

Formally, let $\mathcal{A}$ be a learner with hyperparameter vector $\bm{\lambda} \in \mathbb{R}^p$, parameter vector\footnote{$\mathbf{w}$ can be a scalar, a vector or a matrix.} $\mathbf{w} \in \mathbb{R}^q$, with $p, q \in \mathbb{N}$, and $\mathbf{X} \in \R^{n \times m}$ with $n,m \in \mathbb{N}$ an assigned data matrix. For learning model $\mathcal{A}$, the HPO can be written  as:

\begin{equation} \label{eq1}
\bm{\lambda}^*=\argmin_{\bm{\lambda} \in \Lambda}\mathscr{F}(\mathcal{A}(\mathbf{w}(\bm{\lambda}),\bm{\lambda}), \mathbf{X})
\quad 
\text{s. t.} \quad \mathbf{w}(\bm{\lambda}) = \argmin\limits_{\mathbf{w}} \mathscr{L}(\bm{\lambda}, \mathbf{X}),
\end{equation}

where $\mathscr{F}$ evaluates how good is $\mathbf{w}$ gained by learner $\mathcal{A}$ tuned with hyperparameter $\bm{\lambda}$ on $\mathbf{X}$, and $\mathcal{L}$ is an empirical loss. Typically, the inner problem aims to minimize empirical loss $\mathscr{L}$; the outer problem is related to HPs. Because of the implicit dependence of the outer problem on $\bm{\lambda}$, equation \eqref{eq1} is challenging to solve.  Recently, first order bi-level optimization techniques based on estimating Jacobian $\frac{d\mathbf{w}(\bm{\lambda})}{d\bm{\lambda}}$ via implicit or iterative differentiation have been proposed to solve \eqref{eq1}~\cite{franceschi2017forward,Mc,Pedro}.

However, there are still no effective results of using GB methods for HPO in the unsupervised field. This study aims to use these techniques to revise problem \eqref{eq1} in an unsupervised learning context, to automatically achieve HPs. 
We consider Nonnegative Matrix Factorization (NMF) and its constrained variants (in particular sparseness constraint)~\cite{cichocki2007multilayer,chu2021alternating,esposito2019orthogonal,gillis2020nonnegative,kim2007sparse,kim2008nonnegative,lin2007projected,liu2017regularized,merritt2005interior}. We regard these problems as penalized optimization tasks in which penalty coefficients are HPs, focusing on their proper choice via HPO. Taking advantage of the bi-level HPO problem formulation, we construct an alternating bi-level approach that includes the HPs choices as a part of the algorithm that computes the factors in the NMF data approximation task under study.  The rest of this section reviews preliminary concepts on NMF and its sparsity constraints with the importance of the penalty HPs for sparse NMF. Section \ref{proposal}  describes the novel bi-level formulation of the penalized NMF and its treatment via an alternating methodology.   
We prove the existence of the solution to this problem, and we use convergence results to design a new algorithm, named Alternating Bi-level (AltBi), which is described in Section \ref{algoritmo}. It is our numerical proposal to solve the HPO issue in NMF models with additional sparsity constraints. Section \ref{experiments} illustrates the numerical results obtained using the AltBi algorithm on synthetic and real signal datasets. Section \ref{conclusion} sketches some conclusive remarks and future works.

\subsection{Preliminaries}
NMF groups some methodologies aiming to approximate nonnegative data matrix $\mathbf{X}\in\mathbb{R}_+^{n\times m}$ 
as $\mathbf{X}\approx \mathbf{WH}$, where $\mathbf{W}\in\mathbb{R}_+^{n\times r}$ is the \textit{basis} matrix, and $\mathbf{H}\in\mathbb{R}_+^{r\times m}$ is the \textit{encoding} (or \textit{coefficient}) matrix.
The choice of parameter $r$, which determines the number of rows of $\mathbf{H}$ (respectively, columns of $\mathbf{W}$) and $r <<\min(n,m)$, is problem-dependent and user-specified; and it represents an example of HP connected with NMF. A general NMF problem can be formulated as an optimization task 

\begin{equation}
\min\limits_{\mathbf{W}\geq 0, \mathbf{H}\geq 0}{D_\beta(\mathbf{X},\mathbf{WH})}= \min\limits_{\mathbf{W}\geq 0, \mathbf{H}\geq 0} \sum\limits_{i=1}^n{\sum\limits_{j=1}^m}{d_{\beta}(x_{ij},\sum\limits_{k=1}^r w_{ik}h_{kj})},
\label{optprobl}
\end{equation}

where the objective function $D_\beta(\cdot,\cdot)$ is a $\beta$-divergence assessing how well its reconstruction $\mathbf{WH}$ fits $\mathbf{X}$, where $d_\beta$ is generally defined for each $x,y \in \R$ as

\begin{equation*}
d_{\beta}(x,y)= \left \{
\begin{array}{lc}
\frac{1}{\beta(\beta-1)}(x^{\beta}+(\beta-1) y^{\beta}-\beta xy^{\beta-1}) &\beta\in \mathbb{R} \setminus \{0,1\};\\
x\log(\frac{x}{y})-x+y &\beta=1;\\
\frac{x}{y}-\log(\frac{x}{y})-1 &\beta=0.\\
\end{array}
\right.
\end{equation*}

Either data properties and specific application domain influence the particular choice of $D_\beta$ (popular measures are for $\beta = 2,1,0$, i.e., the Frobenius norm, the generalized Kullback-Leibler (KL) and the Itakura-Saito (IS) divergences, respectively).\\
The NMF model in (\ref{optprobl}) can also be enriched with additional constraints by introducing penalty terms

\begin{equation}
\min\limits_{\mathbf{W}\geq 0, \mathbf{H}\geq 0}{D_\beta(\mathbf{X},\mathbf{WH}) + \lambda_\mathbf{W} \mathcal{R}_1(\mathbf{W})+ \lambda_\mathbf{H} \mathcal{R}_2(\mathbf{H})},
\label{eq145}
\end{equation}

where $\mathcal{R}_1:\mathbb{R}^{n\times r}\rightarrow\mathbb{R}$ and $\mathcal{R}_2:\mathbb{R}^{r\times m}\rightarrow\mathbb{R}$ are some penalty functions enforcing specific properties on the factor matrices; $\lambda_\mathbf{W}, \lambda_\mathbf{H} \in \R_+$ are the penalty coefficients (i.e., HPs), that balance the bias-variance trade-off in approximating $\mathbf{X}$ and preserving the additional constraints. It is assumed that at least one of the two HPs is non-null for the penalty to make sense, and (\ref{eq145}) allows to penalize simultaneously one or both factors. The problem of properly selecting the penalty HPs is still an unsolved issue in constrained NMF.

One example of a suitable constraint to impose on NMF factors is sparsity. Sparseness leads to several advantages; it allows obtaining some form of compression, improves the computational cost and gives us better interpretability when many features (the columns in $\mathbf{X}$) are present, and the model becomes very large. Several zeros avoid over-fitting, allow a way for feature extraction, and elude modeling the noise implicitly embedded in the data. 
Nonnegativity in the NMF algorithms naturally produces sparse factors. Nonetheless, because the factor sparseness degree is uncontrollable, it is preferable to use direct constraints that can enforce this property~\cite{zheng2009tumor}.
Various penalty terms enforce sparsity in NMF: an example is to apply $\ell_0$ “norm” on $\mathbf{W}$ and $\mathbf{H}$~\cite{gao2015hyperspectral}. 
However, this penalization makes the associated objective function non-smooth, globally non-differentiable, and non-convex, resulting in an NP-hard optimization problem \eqref{eq145}. Conversely, due to their analytical proprieties, 
$\ell_1$ and $\ell_2$ norms are valid alternatives to $\ell_0$~\cite{zhang2015survey}. In particular, $\ell_1$ norm originates from the Lasso problem~\cite{tibshirani1996regression} and addresses several computational issues in machine learning and pattern recognition.
Sparsity can also be imposed via $\ell_{1,2}$ norm which is used either as a penalty function or as an objective function~\cite{kong2011robust,li2013dictionary,nie2010efficient}.
The Hoyer's sparse NMF optimization task uses the normalized ratio of $\ell_1$ and $\ell_2$ norm computed on the columns of $\mathbf{W}$ and rows of $\mathbf{H}$~\cite{hoyer2004non}. Section \ref{algoritmo} illustrates our algorithm proposal to tune HPs using an objective function based on the KL-divergence and $\ell_1$ norm.

\subsection{The penalty HP in NMF} 
Usually, static optimization mechanisms, such as the grid or random search, perform HPs tuning in constrained NMF \eqref{eq145}. These approaches solve several variants of the same problem associated with a predefined discrete set of HPs and then choose the best one according to empirical criteria, (an example can be found in the context of gene expression analysis~\cite{kimpark_sparseNMF2007}). Other approaches are based on the Discrepancy Principle (DP) and the L-curve criterion which are empirical methods used to tune the penalty value in Tikhonov regularization~\cite{hansen1992analysis,hansen1993use}. Active-set approaches for the NMF model, which are based on the Frobenius norm and the Tikhonov regularization on $\mathbf{W}$, are other sophisticated strategies for tuning penalty HPs, and they usually choose the best penalty HP according to clustering performance~\cite{zdunek2014regularized}. Bayesian optimization methodologies are exploited to solve the problem 

\begin{equation}
    \min\limits_{\mathbf{H}\geq 0}{\frac{1}{2}||\mathbf{X}-\mathbf{WH}||_F^2+\frac{\lambda}{2}\mathcal{R}(\mathbf{H})},
    \label{eq:froregH}
\end{equation}

where $\mathcal{R}(\mathbf{H})=\Tr{(\mathbf{H}^\top \mathbf{EH})}= \sum\limits_i ||\mathbf{h}_i||_1^2 = ||\mathbf{H}||_{2,1}^2$, $\mathbf{h}_i$ is the $i$-th row of $\mathbf{H}$, $\mathbf{E}\in\mathbb{R}^{r\times r}$ is the all-ones matrix that enforces sparsity on $\mathbf{H}$'s columns using the squared norm $\ell_{2,1}$, and $\lambda \in \R$ is the penalty HP~\cite{zdunek2007nonnegative}. 
In the associated minimization problem, the choice of $\lambda$ is made according to the following exponential rule

\begin{equation}
    \lambda^{(k)}=\lambda_0 \exp{(-\tau k)},
\end{equation}

where $k$ is the number of iteration in the algorithm, $\lambda_0$ is the initial value of the HP and $\tau$ is a parameter controlling the results.\\
In this study, we want to automate the choice of HPs through GB methods and bi-level approach in order to free the HPs tuning from the domain expert and any empirical or cross-validation related techniques.

\section{New Formulation} 
\label{proposal}
Several approaches can tackle HPO in model \eqref{eq145}, even though a uniform theory applicable to general objectives and penalty functions is still lacking. The results reported in this study aim to fill this void. \\This section presents the main contribution of the work. We reformulate the model \eqref{eq145} as:

\begin{equation}\label{new}
   \min\limits_{\mathbf{H}\geq 0,\mathbf{W}\geq 0} D_{\beta}(\mathbf{X}, \mathbf{WH})+ \mathcal{R}_1(\mathbf{L_W} \mathbf{W})+\mathcal{R}_2(\mathbf{H L_H}),
\end{equation}

where $\mathbf{L_W} \in \R^{n \times n}$ and $\mathbf{L_H} \in \R^{m \times m}$ are diagonal matrices of HPs associated with each row of $\mathbf{W}$ and each column of $\mathbf{H}$, respectively, and $\mathcal{R}_1: \R^{n \times r} \to \R$ and $\mathcal{R}_2: \R^{r \times m} \to \R$ are the penalty functions being continuous and such that $\mathcal{R}_{i}(\mathbb{0})=0$ for each $i= 1,2$, where $\mathbb{0}$ is the zero element in $\mathbb{R}^{n \times r}$ and $\mathbb{R}^{r \times m}$, respectively. In this way, each row and each column are penalized independently. Although the problem can be written for both factors $\mathbf{W}$ and $\mathbf{H}$, we, for now, focus on the case where
$\mathbf{L_H}=\mathbb{0}_{\R^{m \times m}}$ and $\mathbf{L_W}=\mathbf{L}\in\mathbb{R}^{n\times n}$, diagonal and 
non-null matrix (because the penalty makes sense) so that \eqref{new} is reduced to 

\begin{equation}\label{partcase}
    \min\limits_{\mathbf{H}\geq 0,\mathbf{W}\geq 0} D_{\beta}(\mathbf{X}, \mathbf{WH}) +  \mathcal{R}(\mathbf{L W}).
\end{equation}

A symmetric extension can be easily derived for $\mathbf{L_H} \neq \mathbb{0}_{\R^{m \times m}}$ and $\mathbf{L_W} = \mathbb{0}_{\R^{n \times n}}$.  On the other hand, simultaneous optimization on both factors (with respect to \eqref{new}) requires some supplement theory related to the bi-level formulation of NMF for columns, which will be the subject of future works. 
Problem \eqref{partcase} is convex in each variable separately\footnote{for particular values of $\beta$ and specific penalty functions.}.
Alternating optimization techniques are helpful to incorporate into the minimization process the updates of each NMF factor separately. Firstly, fixing $\mathbf{W}$, one estimates $\mathbf{H}$; subsequently,  $\mathbf{H}$ is fixed to estimate $\mathbf{W}$. To tune the penalty HP matrix $\mathbf{L}$, we incorporate it simultaneously into the process of updating factor $\mathbf{W}$, introducing a bi-level strategy on each row of $\mathbf{W}$. 

Let $\mathbf{w}_i \in \R^r$ be the $i$-th column of $\mathbf{W}^\top$ and from now on, let $\bm{\lambda} \in \R^n$ indicate the vector of diagonal elements of $\mathbf{L}$ and $\lambda_i \in \Lambda \subset \R$ the $i$-th diagonal element of $\mathbf{L}$. 
We first consider the simple minimization problem in $\mathbf{H}$ (for fixed $\mathbf{W}$):

\begin{equation} 
\label{one}
    \min\limits_{\mathbf{H} \geq 0} D_{\beta}(\mathbf{X}, \mathbf{WH}).
\end{equation}

To obtain the update for $\mathbf{W}$ and achieve an optimal solution for matrix $\mathbf{L}$, we use the bi-level task applied to each $\mathbf{w}_i$, i.e the $i$-th row of $\mathbf{W}$, which for each $i = 1,\dots, n$ reads:

\begin{eqnarray}\label{two}
\min\limits_{\lambda_i \in \Lambda} f(\lambda_i), \qquad 
  f(\lambda_i) = \inf \{\mathcal{E}(\mathbf{w}_i(\lambda_i), \lambda_i): \mathbf{w}_i(\lambda_i) \in \argmin\limits_{\mathbf{u} \in \mathbb{R}^{r}}\mathscr{L}_{\lambda_i}(\mathbf{u})\},
  \end{eqnarray}
  
where $f: \Lambda \to \R$ is the so-called {\it Response Function} (RF) of the outer problem related to the $i$-th row of $\mathbf{W}$ (according to the bi-level notation). Namely, first we fix an outer level hyperparameter $\lambda_i$, then we solve the inner level problem finding $\mathbf{w}_i$ as argmin of a loss function. Finally, the feasible solution of $\min_{\lambda_i \in \Lambda} f(\lambda_i)$ is evaluated. Note that the RF associated with the entire matrix problem is $F(\bm{\lambda}) = \sum\limits_{i=1}^n f(\lambda_i)$. 
{\it Error Function} (EF) $\mathcal{E}$ is the outer objective such that

\begin{equation}\label{error}
        \mathcal{E}:\mathbb{R}^{r} \times \Lambda \to \mathbb{R}: (\mathbf{w}_i, \lambda_i) \mapsto \sum\limits_{j=1}^m d_{\beta}(\mathbf{x}_j ,\sum\limits_{k=1}^r w_{ik}(\lambda_i) h_{kj}),
\end{equation} 

where for every $\lambda_i \in \Lambda$; whereas {\it Loss Function} (LF) $\mathscr{L}_{\lambda_i}$  is the inner objective

\begin{equation}\label{loss}
    \mathscr{L}_{\lambda_i} : \mathbb{R}^{r} \to \mathbb{R}: \mathbf{w}_i \mapsto \sum\limits_{j=1}^m d_{\beta}(\mathbf{x}_j ,\sum\limits_{k=1}^r w_{ik} h_{kj})+ \lambda_i \mathcal{r}(\mathbf{w}_i),
\end{equation}

where $\mathcal{r}: \R^r \to \R$ is a linear function, closely related to the enforcement of the constraint, such that $\sum\limits_{i=1}^n \lambda_i \mathcal{r}(\mathbf{w}_i)= \mathcal{R}(\mathbf{LW})$. \\
In the following section, we clarify how to handle each part of optimization problems \eqref{one} and \eqref{two}. 

\subsection{Finding the unpenalized factor}
\label{penaH}
To solve \eqref{one}, different update rules satisfying diverse requirements exist (fast convergence or easy implementation mechanisms); they range from multiplicative to additive update rules~\cite{berry2007algorithms,wang2013nonnegative}. 
In this study, we focus on the standard NMF Multiplicative Updates (MU)~\cite{lee2000algorithms} due to their ease of implementation and monotonic convergence. From initial matrices, MU uses scaling rules from the minimization of an auxiliary function (derived from Richardson-Lucy or Expectation-Maximization (EM) approaches~\cite{dempster1977maximum,lange1984reconstruction,lucy1974iterative,richardson1972bayesian,saul1997aggregate}). 
Any approach based on an auxiliary function is often used to solve NMF problems because it ensures the nonnegativity of the computed factors without further handling, notwithstanding that it converges slowly~\cite{fevotte2011algorithms,lee2000algorithms}.

We briefly review the update rule for the general $\beta$ divergence giving the particular result for the KL divergence.\\
Considering the update rule

\begin{equation}\label{generalizeupdata}
    \mathbf{H} \leftarrow \mathbf{H} .* \frac{\mathbf{W}^\top ((\mathbf{WH})^{\cdot[\beta-2]}\cdot* \mathbf{X})}{\mathbf{W}^\top (\mathbf{WH})^{\cdot [\beta-1]}},
\end{equation}

being $.*$ the Hadamard product (exponential and ratio operators are computed element-wise), it is known that the general $\beta$-divergence $D_{\beta}(\cdot, \cdot)$, is non-increasing using rule \eqref{generalizeupdata} for $0 \leq \beta \leq 2$. In particular, the paper \cite{lee1999learning} shows this result for $\beta =2$ and $\beta = 1$. In \cite{kompass2007generalized}, it is generalized to the case $1 \leq \beta \leq 2$. In practice, we observe that the criterion is still non-increasing under update \eqref{generalizeupdata} for $\beta < 1$ and $\beta > 2$ (and in particular
for $\beta = 0$, that corresponds to the IS divergence). More details on theoretical results and proofs can be found in~\cite{fevotte2009nonnegative,fevotte2011algorithms,lee2000algorithms}.\\
Specifically for the KL divergence, \eqref{generalizeupdata} becomes:

\begin{equation}\label{H_KL}
\mathbf{H} \leftarrow \mathbf{H}.*\frac{\mathbf{W}^\top(\mathbf{X}./(\mathbf{W H}))}{(\sum\limits_{i=1}^n \mathbf{w}_i) \cdot \mathbb{1_m}^\top},
\end{equation}

where $\mathbb{1}_m$ is the ones-vector of dimension $m$.\\

\subsection{Finding the penalized factor and solving the HPO}\label{subsection22}
To obtain the update for $\mathbf{W}$ and determine an optimal solution for penalty matrix $\mathbf{L}$, we use bi-level approach \eqref{two} applied on each row of $\mathbf{W}$. To simplify the notation, from now until the end of subsection \ref{subsection22}, subscript $i$ for $\mathbf{w}_i$ and $\lambda_i$ is omitted.
For the sake of simplicity, we suppose the existence of a unique minimizer $\mathbf{w}_{(\lambda)}$ for the inner objective. Nevertheless, problem \eqref{two} generally has no closed expression for $\mathbf{w}_{(\lambda)}$, so it does not allow to optimize the outer objective function  directly. 

A reliable approach is to replace the inner problem with a dynamical system~\cite{franceschi2017forward,franceschi2021unified,Mc}. This point allows us to compute an exact gradient of an approximation of \eqref{two}. It also enables optimization of the HPs that define the learning dynamics. As mentioned before, depending on how the gradient with respect to HPs is calculated, two main approaches can be used: the implicit differentiation, based on the implicit function theorem, and the iterative differentiation
approach. In this work, we will focus on the latter.\\
Therefore, the solution of the inner object minimization as a dynamical system with state $\mathbf{w}^{(t)} \in \mathbb{R}^{r}$ can be written as:

\begin{equation} \label{Veneziaa 1}
\mathbf{w}^{(t)}= \Phi_t(\mathbf{w}^{(t-1)}, \lambda) \quad t=1,.,T;  
\end{equation}

with initial condition $\mathbf{w}^{(0)} = \Phi_0(\lambda)$, where $\Phi_t :(\mathbb{R}^{r} \times \R) \rightarrow \mathbb{R}^{r}$ 
is a smooth map, and it is the row-wise update for $\mathbf{W}$, for $t=1,\dots, T$. 
Note that $\mathbf{w}^{(t)}$ for all $i=1,\dots, n$ 
depend on $\lambda$, implicitly. 

Bi-level problem \eqref{two} can be approximated (for each $i= 1,\dots,n)$ using the constrained procedure:

\begin{equation}\label{min2}
       \min\limits_{\lambda} f(\lambda) \quad \text{s. t.} \quad 
       \mathbf{w}^{(t)}=\Phi_t(\mathbf{w}^{(t-1)}, \lambda) \quad \text{for}  \quad t=1, \dots, T.
\end{equation}

In general, procedure \eqref{min2} might not be the best approximation for bi-level problem \eqref{two} since the minimizer of $\mathscr{L}_{\lambda}$, to which the optimization dynamic converges, does not necessarily minimize $\mathcal{E}$.
This problem is overcome by assuming the uniqueness of the minimizer of $\mathscr{L}_{\lambda}$, for any $\lambda \in \Lambda \subset \R$, as we will see in detail in the following subsection \ref{exi}.
Moreover, we note that for $1 \leq \beta \leq 2$, thanks to the convexity of the $\beta$ divergence function, and consequently, of $\mathscr{L}_{\lambda}$\footnote{$\mathscr{L}_{\lambda}$ is convex as a sum of convex functions.}, the associated problems $\argmin f^{(T)}(\lambda)$, $\argmin f(\lambda)$, and $\argmin \mathscr{L}_{\lambda}$ are singleton, where $f^{(T)}$ is the response function at time $T$.

\subsubsection{Existence and Convergence Results}\label{exi}
We provide results on the existence of solutions to problem \eqref{two} and the (variational) convergence for approximate problem \eqref{min2} related to it.
\begin{hypothesis}\label{Hp}
Considering the following assumptions:
\begin{enumerate}
    \item $\Lambda \subset \R$ is compact; 
    \item Error Function \eqref{error} 
    is jointly continuous\footnote{The function is continuous with respect to each variable separately.};
    \item application $(\mathbf{w}, \lambda) \to \mathscr{L}_{\lambda}(\mathbf{w})$ is jointly continuous, and problem $\argmin \mathscr{L}_{\lambda}$ is a singleton for every $\lambda \in \Lambda$;
    \item $\forall \lambda \in \Lambda$, $\mathbf{w}_{(\lambda)} = \argmin{\mathscr{L}_{\lambda}}$ is bounded.
\end{enumerate}
\end{hypothesis}
Therefore, bi-level problem \eqref{two} can be reformulated as follows:

\begin{equation}\label{sette}
    \min_{\lambda \in \Lambda}f(\lambda) = \mathcal{E}(\mathbf{w}_{(\lambda^*)}, \lambda^*), \qquad \mathbf{w}_{(\lambda)}=\argmin_\mathbf{u} \mathscr{L}_{\lambda}(\mathbf{u}),
\end{equation}

where $(\mathbf{w}_{(\lambda^*)},\lambda^*)$ is the optimal solution.

\begin{theorem}[Existence]
Problem \eqref{sette} admits solutions under the assumptions $1-4$. 
\end{theorem}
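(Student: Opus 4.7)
The plan is to apply the Weierstrass extreme value theorem: since $\Lambda$ is compact by Hypothesis \ref{Hp}.1, it suffices to prove that the response function $f:\Lambda\to\mathbb{R}$, defined by $f(\lambda)=\mathcal{E}(\mathbf{w}_{(\lambda)},\lambda)$, is continuous on $\Lambda$; the infimum is then attained at some $\lambda^{*}\in\Lambda$, which is exactly the existence claim for problem \eqref{sette}.

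The decisive intermediate step is to show that the selection map $\lambda\mapsto\mathbf{w}_{(\lambda)}$ is itself continuous on $\Lambda$, because once this is in hand the continuity of $f$ follows immediately from the joint continuity of $\mathcal{E}$ (Hypothesis \ref{Hp}.2). To establish continuity of the minimizer map, I would take any sequence $\lambda_{n}\to\lambda$ in $\Lambda$ and set $\mathbf{w}_{n}:=\mathbf{w}_{(\lambda_{n})}$. By Hypothesis \ref{Hp}.4 the sequence $\{\mathbf{w}_{n}\}$ lies in a bounded subset of $\mathbb{R}^{r}$, so Bolzano--Weierstrass yields a subsequence $\mathbf{w}_{n_{k}}\to\mathbf{w}^{\star}$. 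Starting from the defining inequality $\mathscr{L}_{\lambda_{n_{k}}}(\mathbf{w}_{n_{k}})\le\mathscr{L}_{\lambda_{n_{k}}}(\mathbf{u})$ for every $\mathbf{u}\in\mathbb{R}^{r}$ and passing to the limit via the joint continuity of $(\mathbf{w},\lambda)\mapsto\mathscr{L}_{\lambda}(\mathbf{w})$ from Hypothesis \ref{Hp}.3, one obtains $\mathscr{L}_{\lambda}(\mathbf{w}^{\star})\le\mathscr{L}_{\lambda}(\mathbf{u})$ for all $\mathbf{u}$, so $\mathbf{w}^{\star}\in\argmin\mathscr{L}_{\lambda}$. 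The singleton part of Hypothesis \ref{Hp}.3 forces $\mathbf{w}^{\star}=\mathbf{w}_{(\lambda)}$. A standard subsequence-of-subsequence argument (every subsequence of $\{\mathbf{w}_{n}\}$ has a further subsequence converging to the same limit $\mathbf{w}_{(\lambda)}$) then upgrades this to convergence of the whole sequence $\mathbf{w}_{n}\to\mathbf{w}_{(\lambda)}$.

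With $\lambda\mapsto\mathbf{w}_{(\lambda)}$ shown to be continuous, the composition $f(\lambda)=\mathcal{E}(\mathbf{w}_{(\lambda)},\lambda)$ is continuous as a composition of continuous maps on $\Lambda$, and Weierstrass on the compact set $\Lambda$ closes the argument.

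The main obstacle I anticipate is the limit passage in the minimizer inequality: it is genuinely the \emph{joint} continuity of $\mathscr{L}_{\lambda}(\mathbf{w})$ in $(\mathbf{w},\lambda)$ that is used, not mere separate continuity, and this is why Hypothesis \ref{Hp}.3 is stated in that strong form. Boundedness in Hypothesis \ref{Hp}.4 is equally essential: without it the compactness extraction fails and one cannot even produce a candidate limit $\mathbf{w}^{\star}$. Once these two ingredients are in place, the remainder of the argument is a routine application of the classical continuity/compactness recipe.
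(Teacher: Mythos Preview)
Your proposal is correct and follows essentially the same route as the paper: reduce to Weierstrass on the compact set $\Lambda$ by proving continuity of $f$, establish continuity of $\lambda\mapsto\mathbf{w}_{(\lambda)}$ via a boundedness/subsequence extraction plus a limit passage in the optimality inequality using the joint continuity of $(\mathbf{w},\lambda)\mapsto\mathscr{L}_{\lambda}(\mathbf{w})$, and identify the limit using the singleton assumption. The only cosmetic difference is that the paper phrases the full-sequence convergence as ``bounded sequence with a unique cluster point'' rather than your explicit subsequence-of-subsequence argument, which amounts to the same thing.
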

\begin{proof}
From the compactness of $\Lambda$, the continuity of $f$ ensures minimizers exist.
Consider $\hat{\lambda} \in \Lambda$ and sequence $(\lambda_n)_{n\in \mathbb{N}}$ in $\Lambda$ converging to $\hat{\lambda}$. 
Due to the boundness of associate sequence $(\mathbf{w}_{(\lambda_n)})_{n \in \mathbb{N}}$, there is a converging subsequence $(\mathbf{w}_{(\lambda_{k_n})})_{n \in \mathbb{N}}$ such that $\lim\limits_{\lambda_{k_n} \to \hat{\lambda}} \mathbf{w}_{(\lambda_{k_n})}= \hat{\mathbf{w}} \in \R^r$.\\
For point $3$ in Hypothesis \ref{Hp}, since $\lambda_{k_n}$ converges to $\hat{\lambda}$, it results:

\begin{equation}
    \forall \mathbf{w} \in \R^{r} \quad \mathscr{L}_{\hat{\lambda}}(\hat{\mathbf{w}})= \lim_n \mathscr{L}_{\lambda_{k_n}}(\mathbf{w}_{(\lambda_{k_n})}) \leq \lim_n \mathscr{L}_{\lambda_{k_n}}(\mathbf{w})= \mathscr{L}_{\hat{\lambda}}(\mathbf{w}).
\end{equation}

Thus, $\hat{\mathbf{w}}$ is a minimizer of $\mathscr{L}_{\hat{\lambda}}$ and consequently $\hat{\mathbf{w}} = \hat{\mathbf{w}}_{(\lambda)}$. This proves that sequence $(\mathbf{w}_{(\lambda_n)})_{n \in \mathbb{N}}$ is bounded and has a unique accumulation point.\\ Consequently $(\mathbf{w}_{(\lambda_n)})_{n \in \mathbb{N}}$ converges to $\mathbf{w}_{(\hat{\lambda})}$ (i.e. its unique accumulation point). \\Lastly, for point 2 of Hypothesis \ref{Hp} and since $(\mathbf{w}_{(\lambda_n)}, \lambda_n) \to (\mathbf{w}_{(\hat{\lambda})}, \hat{\lambda})$, it follows 
   $f(\lambda_n)=\mathcal{E}(\mathbf{w}_{(\lambda_n)}, \lambda_n) \to \mathcal{E}(\mathbf{w}_{(\hat{\lambda})}, \hat{\lambda}) = f(\hat{\lambda})$,
that concludes the proof.
\end{proof}

\begin{theorem}[Convergence]\label{Connv} In addition to Hypothesis \ref{Hp}, 
suppose that:
\begin{itemize}
    \item[5.] $\mathcal{E}(\cdot, \lambda)$ is uniformly Lipschitz continuous;
    \item[6.] $(\mathbf{w}^{(T)}_{(\lambda)})_{T \in \mathbb{N}} \to \mathbf{w}_{(\lambda)}$ 
    uniformly on $\Lambda$ for $T \to + \infty$.
\end{itemize} 
Then
\begin{itemize}
    \item[(a)] $\inf f^{(T)}(\lambda) \to \inf f(\lambda)$,
    \item[(b)] $\argmin f^{(T)}(\lambda) \to \argmin f(\lambda)$.
\end{itemize}
\end{theorem} 
To prove Theorem \ref{Connv}, the following preliminary result concerning the stability of minima and minimizers in optimization problems is helpful (the complete proof of this result can be found in~\cite{dont}). 

\begin{theorem}
\label{the37}
Let $g_T$ and $g$ be lower semi-continuous functions defined on a compact set $\Lambda$. If $g_T \to g$ uniformly on $\Lambda$ for $T \to +\infty$, then
\begin{itemize}
    \item[(a)] $\inf g_T \to \inf g$
    \item[(b)] $\argmin g_T \to \argmin g$. 
\end{itemize}
\end{theorem}
Thanks to these results, Theorem \ref{Connv} can be proved.
\begin{proof}[Proof of Theorem \ref{Connv}]

The uniform Lipschitz continuity of $\mathcal{E}(\cdot, \lambda)$ ensures that there exists $\nu > 0$ such that:

\begin{equation*}
    |f^{(T)}(\lambda)-f(\lambda)|= |\mathcal{E}(\mathbf{w}^{(T)}_{(\lambda)}, \lambda)- \mathcal{E}(\mathbf{w}_{(\lambda)}, \lambda)| \leq \nu ||\mathbf{w}^{(T)}_{(\lambda)}- \mathbf{w}_{(\lambda)}||,
\end{equation*}

for every $T \in \mathbb{N}$, and $\lambda \in \Lambda$.\\
Since $\mathcal{E}(\cdot, \lambda)$ is uniformly Lipschitz continuous, it results that $f^{(T)}(\lambda) \to f(\lambda)$ uniformly on $\Lambda$ as $T \to +\infty$. The thesis follows by Theorem \ref{the37}.
\end{proof}
Hypotheses $1-6$ are satisfied by many problems of practical interest, in particular when $1 \leq \beta \leq 2$. Results for other values of $\beta$ could be obtained, losing the hypothesis of convexity of $f$ and $\mathscr{L}_{\lambda}$.


\subsubsection{Solving the bi-level problem} \label{RMD}
Bi-level problem \eqref{sette} or approximate problem \eqref{min2} satisfy existence and convergence theorems, respectively, so we can focus on  finding penalty HPs matrix $\mathbf{L}$ in practice. We now reintroduce subscript $i$ for $\mathbf{w}_i$ and $\lambda_i$. Applying a gradient type approach on each diagonal element of $\mathbf{L}= \diag(\bm{\lambda})$, the optimization of $\bm{\lambda}$ depends on the approximation of hypergradient $\nabla_{\bm{\lambda}}F$. Using the chain rule, it results: 

\begin{equation}\label{hypergradient}
     \frac{\partial F}{\partial \lambda_i} = \frac{\partial f}{\partial \lambda_i} + \frac{\partial f}{\partial \mathbf{w}_i^{(T)}} \cdot \frac{d \mathbf{w}_i^{(T)}}{d \lambda_i}, \quad \forall i= 1, \dots, n,
\end{equation}

where $\frac{\partial f}{\partial \lambda_i} \in \R$ and $\frac{\partial f}{\partial \mathbf{w}_i^{(T)}} \in \R^{r}$ are available.

Following the iterative differentiation approach, the computation of the hypergradient can be done using the Reverse-Mode Differentiation (RMD)  or Forward-Mode Differentiation (FMD). RMD computes the hypergradient by back-propagation; instead, FMD works with forwarding propagation. In our algorithm, we use only the second mode; for completeness, we report both.

\paragraph{\textbf{Reverse Mode}}
The reverse strategy to compute the hypergradient is based on the Lagrangian perspective calculated for \eqref{min2}, that is $\mathfrak{L}:\mathbb{R}^r \times \Lambda \times \mathbb{R}^r \to \mathbb{R}$ which is defined as 
    $ \mathfrak{L}(\mathbf{w}_i,\lambda_i, \bm{\alpha}) 
     = 
     \mathcal{E}(\mathbf{w}_i^{(T)}, \lambda_i)
     + \sum \limits_{t=1}^T\bm{\alpha}_t^\top(\Phi_t(\mathbf{w}_i^{(t-1)},\lambda_i)-\mathbf{w}_i^{(t)})$ 
for $i = 1, \dots, n$, where, for each $t =1,\dots,T$, $\bm{\alpha}_t \in \mathbb{R}^r$ are the Lagrange multipliers associated with the $t$-th step of the dynamics. The partial derivatives of Lagrangian $\mathfrak{L}$ are

\begin{equation*}
\frac{\partial{\mathfrak{L}}}{\partial{\bm{\alpha}_t}}=\Phi_t(\mathbf{w}_i^{(t-1)},\lambda_i)-\mathbf{w}_i^{(t)}, \quad
\frac{\partial{\mathfrak{L}}}{\partial{\mathbf{w}_i^{t}}} =\bm{\alpha}^\top_{t+1}\mathbf{A}_{t+1}-\bm{\alpha}^\top_t,
\end{equation*}

\begin{equation*}
\frac{\partial{\mathfrak{L}}}{\partial{\mathbf{w}_i^{(T)}}} = 
\nabla \mathcal{E}(\mathbf{w}_i^{(T)}, \lambda_i)-\bm{\alpha}^\top_T, \quad
\frac{\partial{\mathfrak{L}}}{\partial{\lambda_i}} = \sum\limits_{t=1}^T \bm{\alpha}^\top_t \mathbf{b}_t,
\end{equation*}
where 

\begin{equation}\label{Aandb}
    \mathbf{A}_t = \frac{\partial{\Phi_t(\mathbf{w}_i^{(t-1)}, \lambda_i)}}{\partial{\mathbf{w}_i^{(t-1)}}} \in \R^{r \times r} \quad \text{and} \quad \mathbf{b}_t =  \frac{\partial{\Phi_t(\mathbf{w}_i^{(t-1)}, \lambda_i)}}{\partial{\lambda_i}} \in \R^{r \times 1}. 
\end{equation}

Therefore the optimality conditions give the iterative rules of RMD: 

\begin{equation}
\begin{cases}
    \bm{\alpha}_T^\top = \nabla \mathcal{E}(\mathbf{w}_i^{(T)}, \lambda_i), \\  h_T = \frac{\partial f}{\partial \lambda_i},\\
    h_{t-1} = h_t + \mathbf{b}_t\bm{\alpha}^\top_t,\\ 
    \bm{\alpha}^\top_{t-1} = \mathbf{A}_t\bm{\alpha}^\top_t,
\end{cases}
\end{equation}

for $t=T, \dots, 1$ and $i = 1, \dots, n$. Then the $i$-th component of the hypergradient can be computed as $\frac{\partial f}{\partial \lambda_i}  = h_{0}$. 
\paragraph{\textbf{Forward-Mode}}  
FMD computes the derivative of \eqref{hypergradient} by the chain rule.  
Each $\Phi_t$ depends on $\lambda_i$ directly,  and on $\mathbf{w}_i^{(t-1)}$ indirectly, for $t=1,\dots,T$. Hence: 

\begin{equation} \label{Venezia 13}
\frac{d \mathbf{w}_i^{(t)}}{d \lambda_i}= \frac{\partial \Phi_t(\mathbf{w}_i^{(t-1)},\lambda_i)}{\partial \mathbf{w}_i^{(t-1)}}\frac{d \mathbf{w}_i^{(t-1)}}{d \lambda_i}+\frac{\partial \Phi_t(\mathbf{w}_i^{(t-1)}, \lambda_i)}{\partial \lambda_i}.
\end{equation}

Defining $\mathbf{s}_t= \frac{d \mathbf{w}_i^{(t)}}{d \lambda_i} \in \R^{r}$, each FMD iterate is:

\begin{equation}\label{eqdiff}
\begin{cases}
    \mathbf{s}_0 = \mathbf{b}_0; \\ \mathbf{s}_t = \mathbf{A}_t \mathbf{s}_{t-1}+ \mathbf{b}_t \quad t=1, \dots, T;
\end{cases}
\end{equation}

where $\mathbf{A}_t$ and $\mathbf{b}_t$ are defined as above, and the $i$-th component of the hypergradient is 

\begin{equation}\label{Hypergradiente_f}
    \frac{\partial F}{\partial \lambda_i}= \mathbf{g}_T^\top \cdot \mathbf{s}_T \in \R, \quad
\text{being} \quad
  \mathbf{g}_T = \frac{\partial f}{\partial \mathbf{w}_i^{(T)}} \in \R^{r}. 
\end{equation}

Letting $\mathbf{s}_0=0$, the solution of \eqref{eqdiff} solves: 

\begin{equation} \label{hyppp}
\frac{\partial F(\lambda_i)}{\partial \lambda_i} = \frac{\partial f^{(T)}(\lambda_i)}{\partial \mathbf{w}_i^{(T)}} \Big(\mathbf{b}_T + \sum_{t=0}^{T-1}(\prod_{s=t+1}^T \mathbf{A}_s) \mathbf{b}_t\Big) .
\end{equation}

\paragraph{\textbf{Computational considerations}}
Opting between RMD and FMD depends on balancing the trade-off based on the size of $\mathbf{w}_i$ and $\lambda_i$. The RMD approach requires that $\mathbf{w}_i^{(t)}$ for all $i = 1, \dots, n$ and all $t= 1, \dots, T$ are stored in memory to compute $\mathbf{A}_t$ and $\mathbf{b}_t$ in the backward pass, and therefore it is suitable when the quantity $rT$ is small.
As we will see later, our approach uses the FMD strategy that requires time $O(rT)$ and space $O(r)$ for every row and iteration.

\section{Alternating Bi-level  Algorithm - AltBi}\label{algoritmo}
In this section, we present our Alternating Bi-level (AltBi) algorithm for the particular case of $\beta=1$ and $\ell_1$ as penalty function in \eqref{partcase}:

\begin{equation}\label{partcase_ex}
        \min\limits_{\mathbf{H}\geq 0,\mathbf{W}\geq 0} D_{1}(\mathbf{X}, \mathbf{WH}) + ||\mathbf{L W}||_1.
\end{equation}
It implements the procedures described in the previous sections, performing NMF updating, including the automatic setting of the HPs.  As its name suggests, AltBi optimizes $\mathbf{H}$ and $\mathbf{W}$ alternately through the bi-level approach. 
Sub-interval of arbitrary length $T$, called $bunch$, is considered to perform the bi-level procedure on $\mathbf{W}$. It ensures the extraction of a convergent sub-sequence from any bounded sequence\footnote{This holds for the Bolzano-Weierstrass Theorem.}. Even if this is not unique, it is enough to consider its sub-sequence to have the same limit. 

Algorithm \ref{algor} shows the pseudo-code for AltBi. It receives as input data matrix $\mathbf{X}$, the rank of factorization $r$, initial matrices $\mathbf{W}$, $\mathbf{H}$, and vector $\bm{\lambda}$ of the diagonal elements of $\mathbf{L}$. We initialize the number of iterates $MaxIter$, tolerance $tol$, and length $T$ of the bunch. The outer while-loop repeats the alternating algorithm until one of the two conditions $err>tol$ or $iter<MaxIter$ is false. Error is defined as the absolute value of the difference in the divergence calculated between two successive iterates divided by the divergence at the initial step. The inner for-loop performs the bi-level procedure for every bunch, calculating the hypergradient to update $\bm{\lambda}$ with a gradient method. The algorithm returns the optimal matrices $\mathbf{W}^*$, $\mathbf{H}^*$, and $\mathbf{L}^* = \diag(\bm{\lambda}^*)$. 

\begin{algorithm}[h]
\caption{Alternating Bi-level Algorithm - AltBi}
\label{algor}
\SetAlgoLined
\KwData{$\mathbf{X} \in \R_{+}^{n \times m}$,  $r < \min(n,m)$.}
\KwResult{$\mathbf{W}^{*} \in \R_{+}^{n \times r}$,  $\mathbf{H}^{*} \in \R_{+}^{r \times m}, \mathbf{L}^{*}=\diag(\bm{\lambda}^{*}) \in \R_{+}^{n \times n}$.}
\textbf{Initializations:} $\mathbf{W} \in \R_{+}^{n\times r}$, $\mathbf{H} \in \R_{+}^{r \times m}$,  $\mathbf{L}=\diag(\bm{\lambda} = (\lambda_1, \dots, \lambda_n))$, $T$, $MaxIter$, $tol$, $err$, and $iter$.\\ 
\While{(err $>$ $tol$) \& (iter $<$ $MaxIter$)}{
update $\mathbf{H}$ as in \eqref{H_KL}\;
\For{$t \in \{1, \dots,T\}$}{
    \For{$i \in \{1, \dots,n\}$}{
   update $\mathbf{w}_i^{(t)}$ as in \eqref{multidivKL1}\;
    compute $\mathbf{A}_t$ and $\mathbf{b}_t$ as in \eqref{Aandb}\;
    compute $\frac{\partial F}{\partial \lambda_i^{(t)}}$ as in \eqref{hyppp}\;}}
    
    
rearrange $\mathbf{w}_i$ for all $i = 1, \dots, n$ to construct $\mathbf{W}$\; 
rearrange $\frac{\partial F}{\partial \lambda_i^{(t)}}$ for all $i = 1, \dots, n$ to construct $\nabla_{\bm{\lambda}}F$\;   
update $\bm{\lambda}$ as in \eqref{uplambda}\;
$\text{iter}+ = 1$\; 
}
\end{algorithm}

Referring to \eqref{partcase_ex}, we use the MU rule specified in \eqref{H_KL} for updating $\mathbf{H}$. For the bi-level formulation, we keep the KL divergence with the $\ell_1$ norm as a loss function:

\begin{equation}
    \mathscr{L}_{\lambda_i} : \mathbb{R}^{r} \to \mathbb{R}: \mathbf{w}_i \mapsto \sum\limits_{j=1}^m d_{1}(\mathbf{x}_j,\sum\limits_{k=1}^r w_{ik} h_{kj})+ \lambda_i ||\mathbf{w}_i||_1,
\end{equation}

whereas the KL divergence is the error function of the outer problem:

\begin{equation}
        \mathcal{E}:\mathbb{R}^{r} \times \Lambda \to \mathbb{R}: (\mathbf{w}_i, \lambda_i) \mapsto \sum\limits_{j=1}^m d_{1}(\mathbf{x}_j,\sum\limits_{k=1}^r w_{ik}(\lambda_i) h_{kj}).
\end{equation}

To assess the theoretical results for the previous functions, Hypothesis \ref{Hp} needs to be verified. Observe that  $\mathcal{E}$ is jointly continuous with respect to $\mathbf{w}_i$ and $\lambda_i$. Similarly, $d_{1}$ and the map $(\mathbf{w}_i, \lambda_i) \mapsto \mathscr{L}_i(\mathbf{w}_i)$. From the convexity and the compactness of $\Lambda$, $\argmin \mathscr{L}_{{\lambda}_i}$ is a singleton for any $\lambda_i$. Finally, ${\mathbf{w}_i}_{(\lambda_i)} = \argmin \mathscr{L}_{\lambda_i}$ remains bounded as $\lambda_i$ varies in $\Lambda$, in fact:

\begin{equation*}
    ||\mathbf{w}_i(\lambda_i)|| \leq M \quad \forall \lambda_i \in \Lambda \quad \text{with} \quad M >0, \quad M\leq M^*, 
\end{equation*}

being $M^*= \max \{||\mathbf{w}_i(\lambda_i)||_2^2, \quad \lambda_i \in \Lambda \}$.\\
Matrix $\mathbf{W}$ is updated using the following novel rule by rows $\Phi: \R^r \times \Lambda \to \R^r$ s.t. $(\mathbf{w}^{(t-1)}_i, \lambda_i) \mapsto \mathbf{w}^{(t)}_i$ (this update can be similarly derived as in~\cite{lee2000algorithms,liu2003non}), then for $k=1,\dots,r$ and $i=1, \dots, n$

\begin{equation}\label{multidivKL1}
w^{(t)}_{ik} = w^{(t-1)}_{ik} \frac{
\sum\limits_{j=1}^m{h_{kj} (x_{ij}/\sum\limits_{a=1}^r{w_{ia} h_{aj}})}}{\sum\limits_{j=1}^m{h_{kj}}+\lambda_i}. 
\end{equation}

Its proof is detailed in 
\ref{appendix}.\\
Vector $\bm{\lambda}$ is also updated by the steepest descent procedure: 

\begin{equation}\label{uplambda}
    \bm{\lambda}=\bm{\lambda}-c\nabla_{\bm{\bm{\lambda}}}F(\bm{\lambda}), 
\end{equation}

with stepsize $c = \frac{1}{\textit{iter}}$\footnote{The usual conditions on the stepsize are fulfilled:
$\sum\limits_{s=1}^{MaxIter} c_s = \infty \quad \text{and} \quad \sum\limits_{s=1}^{MaxIter} c_s^2 < \infty$.}. \\
Each component of the hypergradient in \eqref{Hypergradiente_f} can be expressed with 

\begin{equation*}
{({{\mathbf{g}}_T}^\top)}_k = - \sum\limits_{j=1}^m (\frac{x_{ij}}{\sum\limits_{a=1}^r w_{ia}^{(T)} h_{aj}} h_{kj} +h_{kj}) \quad \text{for $k=1,\dots, r$,}
\end{equation*}

while $\mathbf{A}_t$ and $\mathbf{b}_t$ for $t =1,\dots,T$ are given by:

\begin{equation*}
    (A_{kl})_t= \left \{
\begin{array}{ccc}
   \frac{\sum\limits_{j=1}^m h_{kj} \cdot (x_{ij}/\sum\limits_{a=1}^r w_{ia}^{(t-1)} \cdot h_{aj}) - w_{ik}^{(t-1)} \cdot \sum\limits_{j=1}^m h^2_{kj} \cdot (x_{ij}/(\sum\limits_{a=1}^r w_{ia}^{(t-1)} \cdot h_{aj})^2)}{\sum\limits_{j=1}^m h_{kj} + \lambda_i} \quad \text{if $l=k$},\\
 - w_{ik}^{(t-1)} \cdot \frac{\sum\limits_{j=1}^m h_{kj} \cdot (x_{ij}/(\sum\limits_{a=1}^r w_{ia}^{(t-1)} \cdot h_{aj})^2) \cdot h_{lj}}{\sum\limits_{j=1}^m h_{kj} + \lambda_i} \quad \text{if $l \neq k$};
\end{array}
\right .
\end{equation*}

$$
\begin{array}{ll}
(b_k)_t =&  
- w_{ik}^{(t-1)} \cdot \frac{\sum\limits_{j=1}^m h_{kj} \cdot (x_{ij}/(\sum\limits_{a=1}^r w_{ia}^{(t-1)} \cdot h_{aj}))}{(\sum\limits_{j=1}^m h_{kj} + \lambda_i)^2} \quad \text{for $k=1,\dots, r$} .
\end{array} 
$$

Although we focused on a specific objective function and its associated update rules, AltBi can be generalized for any $\beta$-divergence and penalty functions $\mathcal{R}$, respecting the assumptions in Section \ref{proposal}.

\begin{remark}
\label{r1}
The computational complexity of rule (\ref{H_KL}) amounts to $\mathcal{O}(Kmnr)$, where $K$ is the number of iterations. 
Update rules (\ref{multidivKL1}) and (\ref{Aandb}) are more expensive due to the use of the bunch and require $\mathcal{O}(KTmnr)$.
The complexity of other rules in Algorithm \ref{algor} is lower, which implies  $\mathcal{O}(KTmnr)$ for the whole algorithm. Note that the complexity of the proposed algorithm is larger with respect to the standard multiplicative update rules in NMF only by factor $T$. 
\end{remark}

\section{Numerical Experiments}
\label{experiments}
This section illustrates the numerical results obtained using the AltBi algorithm on two synthetic and two real datasets. It was implemented in MATLAB 2021a environment, and numerical experiments were executed on the i7 octa-core, 16GB RAM machine. 
The 
benchmarks\footnote{{https://github.com/flaespo/Dataset\_signal\_HPO}} used in the experiments are generated according to the model\footnote{Noiseless dataset $\mathbf{Y} \in \R^{n \times m}$ was constructed. Since our goal is to solve the identification problem, it is unnecessary to perturb matrix $\mathbf{Y}$. In this way, we preserve initial sparsity.}
$\mathbf{X \approx Y = WH}$.
    
The datasets used are described in the following:
\begin{itemize}
    \item[A)] Factor matrices were generated randomly as full rank uniformly distributed matrices. Matrix $\mathbf{H} \in \R_{+}^{r \times m}$ was generated using the MATLAB command {\fontfamily{pcr}\selectfont rand}, while $\mathbf{W}\in \R_{+}^{n \times r}$ was generated using the command {\fontfamily{pcr}\selectfont randn} to obtain sparse columns. 
    Negative entries were replaced with a zero-value. 
    \item[B)] Each column in $\mathbf{W}$ is expressed as a sinusoidal wave signal with the frequency and the phase set individually for each component/column. The example of this signal waveform is plotted in Figure \ref{wave}. The negative entries are replaced with a zero-value. Factor matrix $\mathbf{H}$ was randomly generated as a full rank sparse matrix with sparseness level $\alpha_H$ adjusted by the user.
    \item[C)] The source signals from the file {\fontfamily{pcr}\selectfont
    AC10\_art\_spectr\_noi} of MATLAB toolbox NMFLAB for Signal Processing~\cite{66zdu} have been used. These signals form matrix $\mathbf{W} \in \R^{n \times r}_+$. Exemplary five signals for $n = 1000$ are plotted in Figure \ref{wave2}. Also, in this case, $\mathbf{H}$ was generated as a sparse matrix with $\alpha_H$ fixed sparsity level.
    \item[D)] Real reflectance signals taken from the U.S. Geological Survey (USGS) database  
    have been used as endmembers to generate the mixtures modelling real hyperspectral imaging data. 
    Using the NMF model, the aim is to perform hyperspectral unmixing to obtain spectral components and their corresponding proportion maps called abundances. In our approach, the column vectors of $\mathbf{W}$ contain the spectral signatures (endmembers) (Figure \ref{wave4}), and $\mathbf{H}$ represents the mixing matrix or vectorized abundance maps (Figure \ref{wave3}). The spectral signals are divided into 224 bands covering the range of wavelengths from $400$ $nm$ to $2.5$ $\mu m$. The angle between any pair of the signals is greater than $15$ degrees. These signals form matrix $\mathbf{W} \in \R^{n \times r}_+$, where $n = 224$. The rank of factorization $r$ determines the number of endmembers. 
\end{itemize}

\begin{figure}[ht]
	\centering
	\subfloat[]{\label{wave} \includegraphics[width=0.49\textwidth]{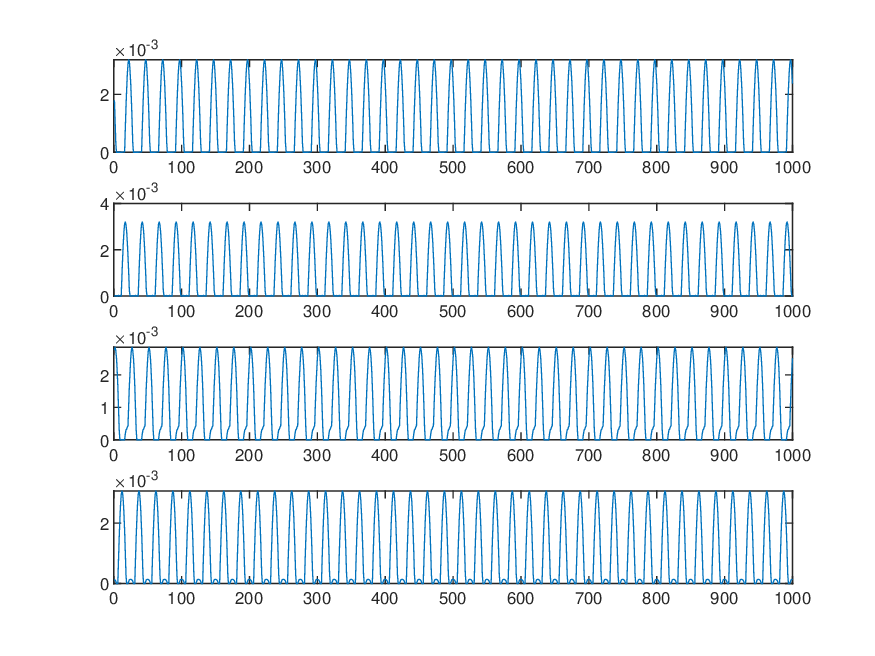}}
	\subfloat[]{\label{wave2} \includegraphics[width=0.49\textwidth]{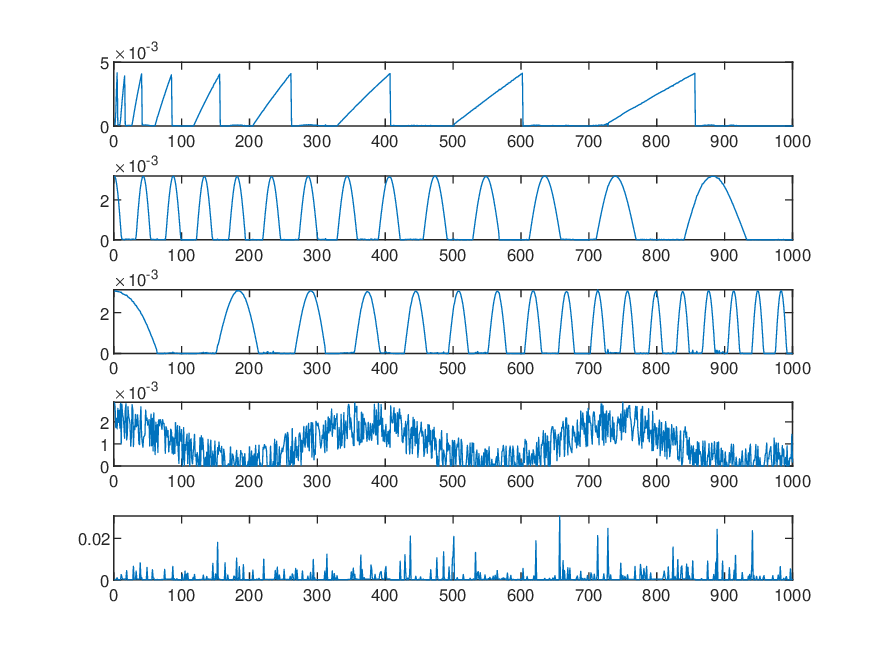}}\\
	\subfloat[]{\label{wave3} \includegraphics[width=0.49\textwidth]{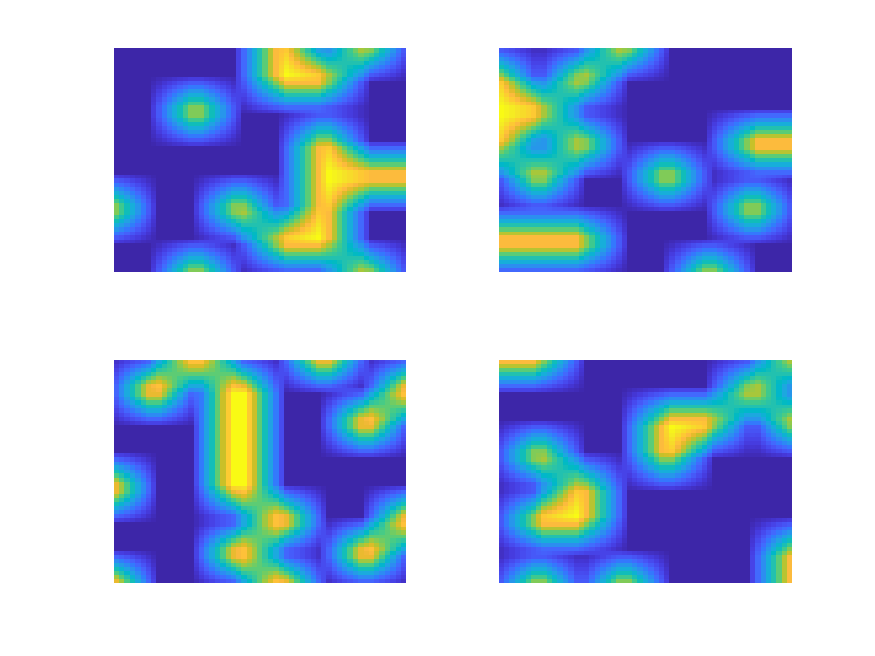}}
	\subfloat[]{\label{wave4} \includegraphics[width=0.49\textwidth]{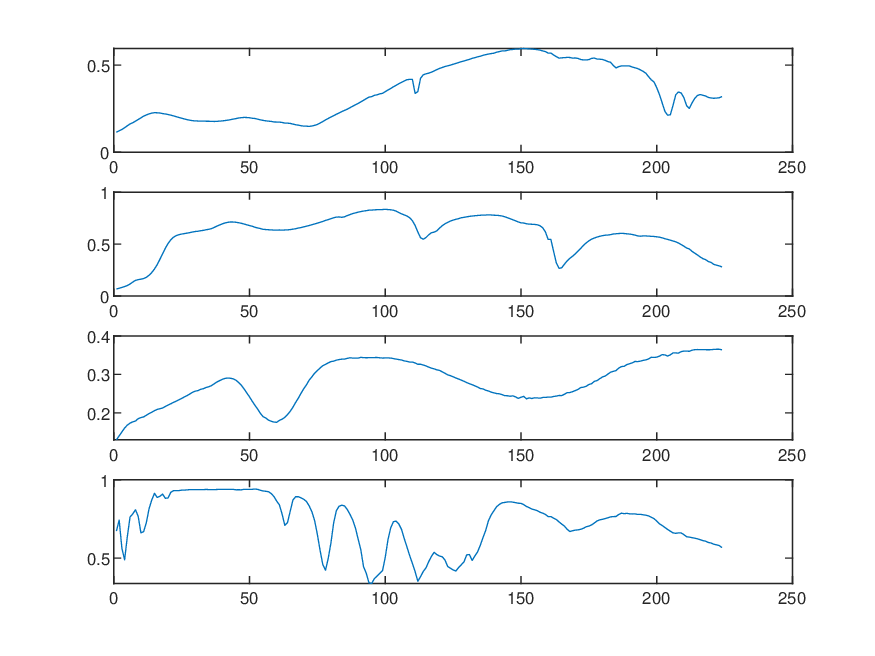}}
	\caption{Waveform of signals in benchmark B $(a)$, and benchmark C $(b)$, Abundance maps $(c)$ and Spectral signatures $(d)$ of benchmark D.}
\end{figure}

Three NMF algorithms were used and compared: AltBi, the standard un-penalized MU in~\cite{lee1999learning}, and the standard penalized that alternates rule \eqref{H_KL} and the modified version of \eqref{eq:upW} in which $\lambda_i = 0.5 \quad \forall i =1, \dots, n$
(referred to P-MU). 
The same random initializer generated from a uniform distribution starts all the algorithms \cite{esposito2021review}. The efficiency of the methods is analyzed by performing 30 Monte Carlo (MC) runs for the NMF algorithms, where for each run, initial matrices $\mathbf{W}$ and $\mathbf{H}$ are different. At the beginning of the process, initial $\bm{\lambda}$ is chosen to have homogeneity between the terms characterizing the objective function, according to:

\begin{equation*}
    \lambda_i = \frac{\sum\limits_{j=1}^m d_{1}(\mathbf{x}_j, \sum\limits_{k=1}^r w_{ik} h_{k j})}{10\cdot \mathcal{r}(\mathbf{w}_i)} \qquad \text{for}\quad i=1, \dots, n;
\end{equation*}

where $\mathcal{r}$ is the $\ell_1$ penalty norm in this particular experimental case. The maximum number of iterations for all the algorithms was set to $1000$, the tolerance for early termination to $10^{-6}$, and the number of inner iterations (length of the bunch) to 4, i.e., $T = 4$.
The following tests were performed:
\begin{itemize}
    \item[1)] Benchmark A was used with $n=1000$, $m=50$, $r=4$.
    \item[2)] Benchmark B was used with
    $n=1000$, $m=50$, $r=4$, $\alpha_H =0.1$.
    \item[3)] Benchmark C was used with 
    $n=1000$, $m=50$, $r=5$, $\alpha_H =0.1$.
    \item[4)] Benchmark D was used with 
    $n=224$, $m=3025$, $r=5$.
\end{itemize}
In all the tests, no noisy perturbations were used.

To evaluate the goodness of the approximation and the effectiveness of the minimization process, we report the relative error\footnote{In this case, we compute the 
relative error as $D_{1}(\mathbf{X, WH})/ \sum_{i,j}x_{ij}log(x_{ij})$ \cite{esposito2019orthogonal}.} 
and the evolution of the objective function with respect to iterations for benchmark A in
Figure \ref{RESS}. All other benchmarks present similar results as reported in Section \ref{appendix2}.
\begin{figure}[htb]
	\centering
	\subfloat[]{\label{resA} \includegraphics[width=0.49\textwidth]{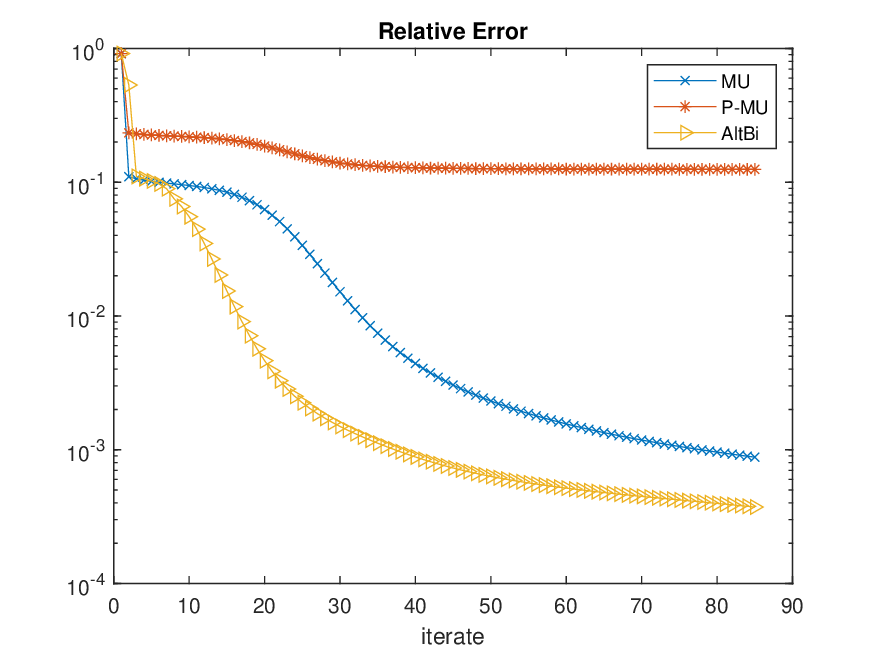}}
	\subfloat[]{\label{objA} \includegraphics[width=0.49\textwidth]{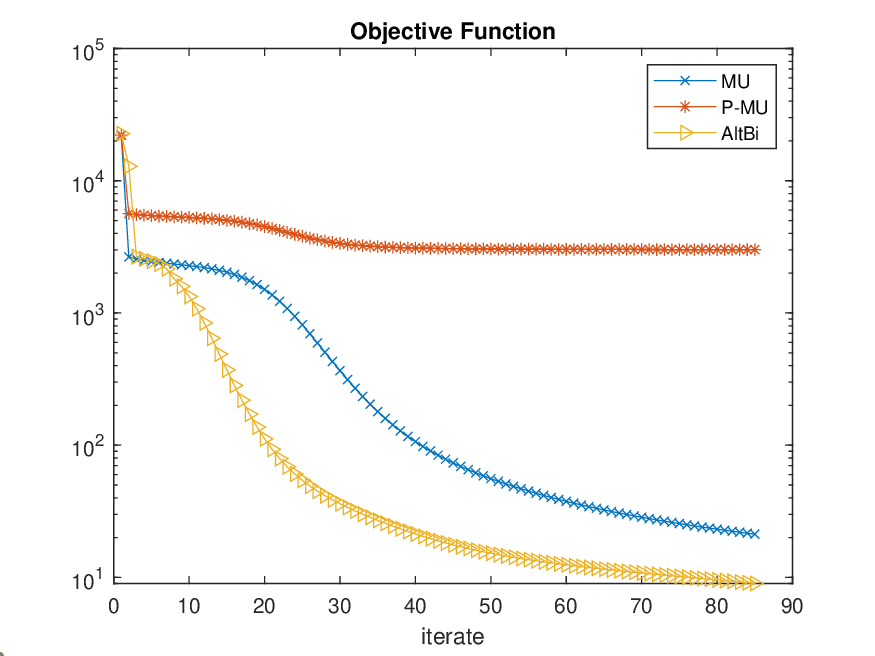}}\\
	\caption{$(a)$ Relative error, $(b)$ evolution of objective function with respect to iterations (Benchmarks A).}\label{RESS}
\end{figure}

The performance of the NMF algorithms was evaluated with the Signal-to-Interference Ratio (SIR) measure \cite{cichocki2009nonnegative} between the estimated signals and the true ones. Figure \ref{SIR totali} shows the SIR statistics (in dB) for assessing the columns in $\mathbf{W}$ and the rows in $\mathbf{H}$ for benchmark A.
\begin{figure}[htb]
	\centering
	\includegraphics[width=0.58\textwidth]{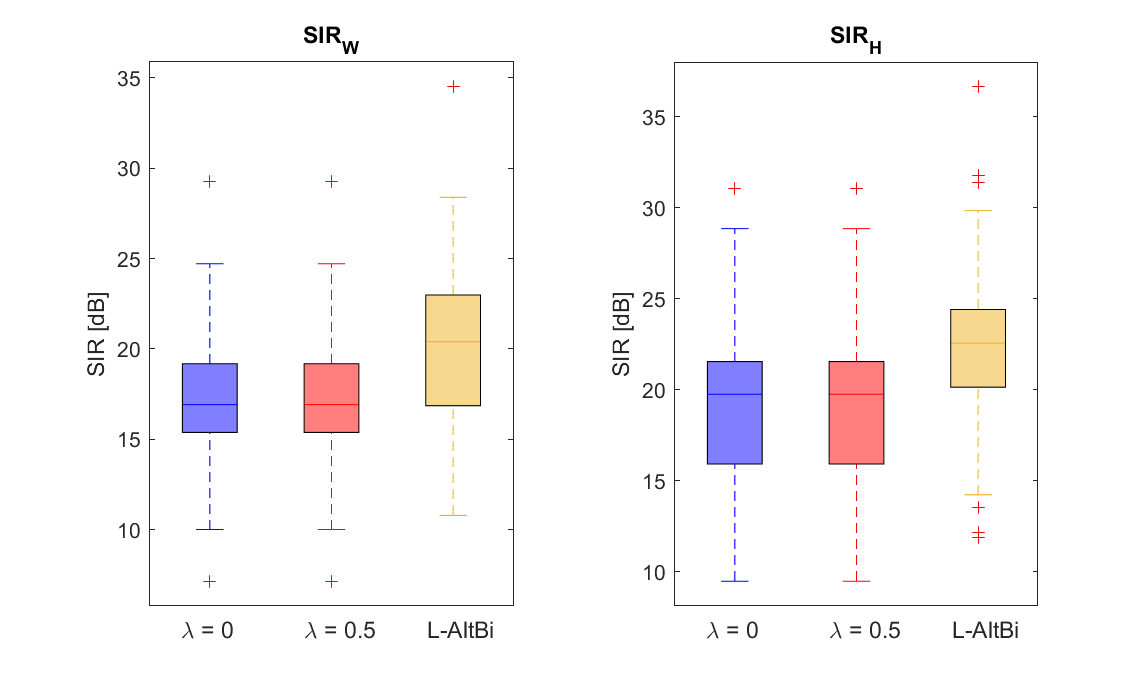}
	\caption{SIR statistics for estimating columns of $\mathbf{W}$ and rows $\mathbf{H}$ (Benchmark A).}\label{SIR totali}
\end{figure}
Table \ref{tabella_SIR} reports the numerical results of Mean-SIR in estimating $\mathbf{W}$ and $\mathbf{H}$ for benchmark A.

\begin{table}[htb]
{\footnotesize\caption{Mean-SIR [dB] for estimating matrices $\mathbf{W}$ and $\mathbf{H}$.}
\label{tabella_SIR}
\begin{center}
 \resizebox{0.6\textwidth}{!}{
\begin{tabular}{ c|c|c|c}
 &  MU & P-MU & AltBi \\
\hline
SIR for $\mathbf{W}$ & 16.7325 & 16.7325 & 21.3388\\
SIR for $\mathbf{H}$ & 19.2147 & 19.2147 & 23.3308\\
\end{tabular}
}\end{center}}
\end{table}

The general structure of the optimized $\bm{\lambda}$ has also been inspected. Figure \ref{lambda plot} compares final and initial HPs for benchmark A: pointwise and distribution of vector $\bm{\lambda}$, in Figures \ref{lambdaplot} and \ref{density}, respectively. The peak of the distribution of initial HPs shifts its location from a positive towards the zero value. Thus, the optimized $\bm{\lambda}$ is a sparse vector, suggesting the algorithm prefers to penalize the selected rows of $\mathbf{W}$ rather than all. Finally, the numerical results are also compared to evaluate the sparsity of $\mathbf{W}$ and $\mathbf{H}$ by 
   $ \text{Sp}(\mathbf{A})\footnote{{Sp}(\textbf{A}) represents the ratio between the complement of the number of elements greater than a certain threshold and the total number of elements in matrix $\textbf{A}$.} = 100 \cdot \frac{(1 - \#(\mathbf{A} > \tau ))}{\#\mathbf{A}}$,
for $\mathbf{A} \in \R^{n \times m}$. 
The sparsity constraint was added only on $\mathbf{W}$, and for benchmarks C and D, the user provided the sparsity on $\mathbf{H}$. As shown in Figure \ref{SPA}, the proposed method enforces the sparsity on $\mathbf{W}$ and does not affect the sparsity profile in $\mathbf{H}$, as expected. 

\begin{figure}[ht]
	\centering
	\subfloat[]{\label{lambdaplot} \includegraphics[width=0.48\textwidth]{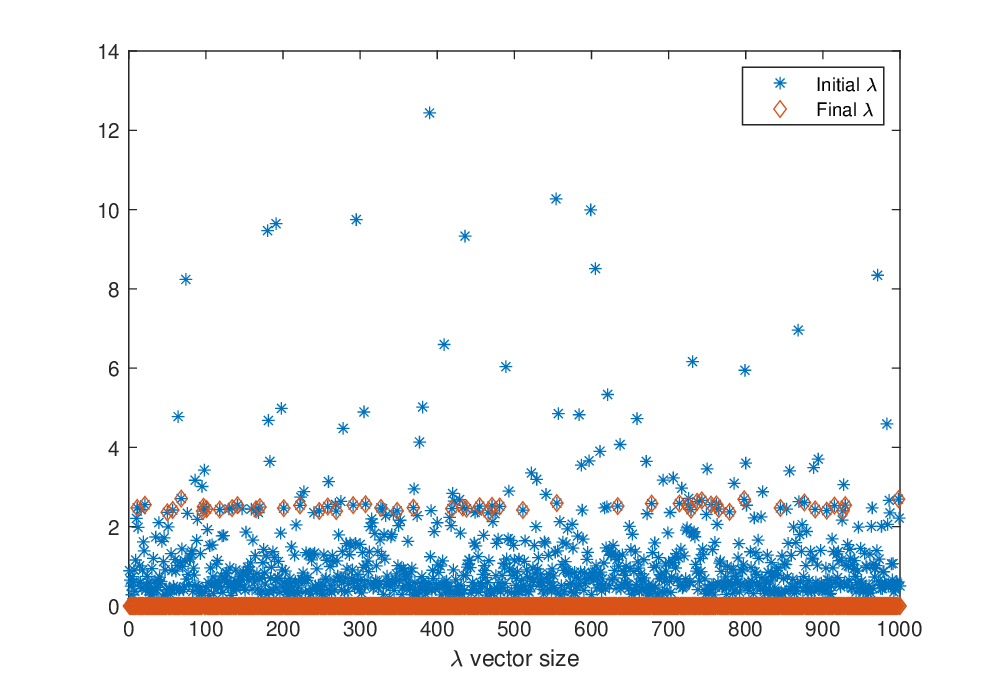}}
	\subfloat[]{\label{density} \includegraphics[width=0.58\textwidth]{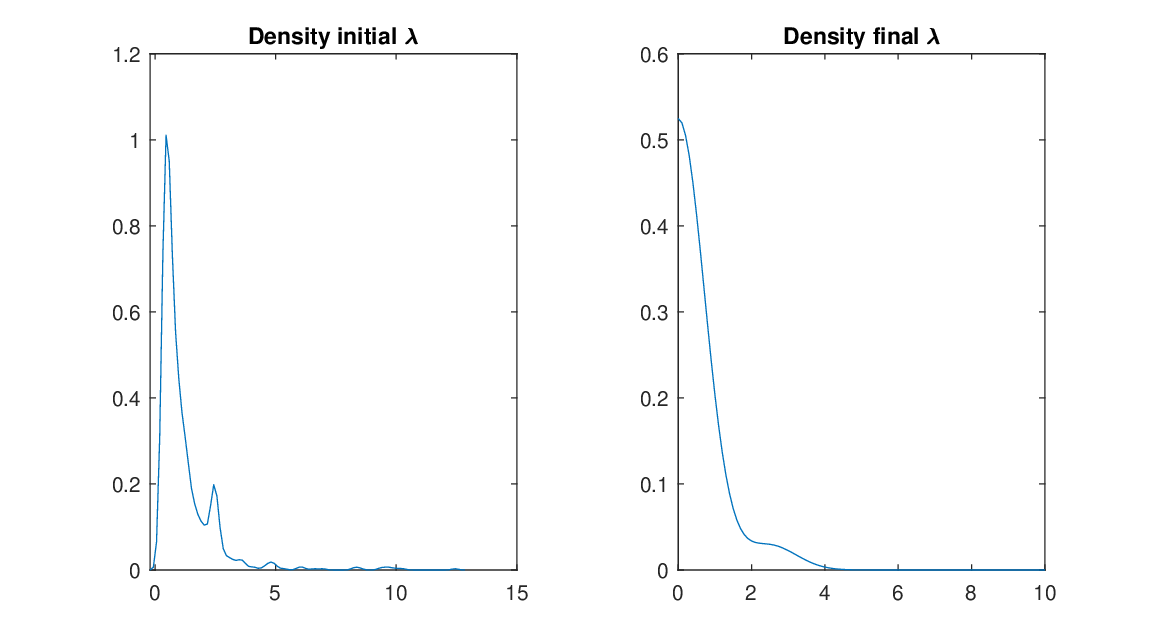}}
    \caption{Initial $\bm{\lambda}$  compared with final $\bm{\lambda}$: vector components $(a)$; density plot of $\bm{\lambda}$ vector $(b)$ (Benchmark A).}
    \label{lambda plot}
\end{figure}

\begin{figure}[ht]
	\centering
	\includegraphics[width=0.58\textwidth]{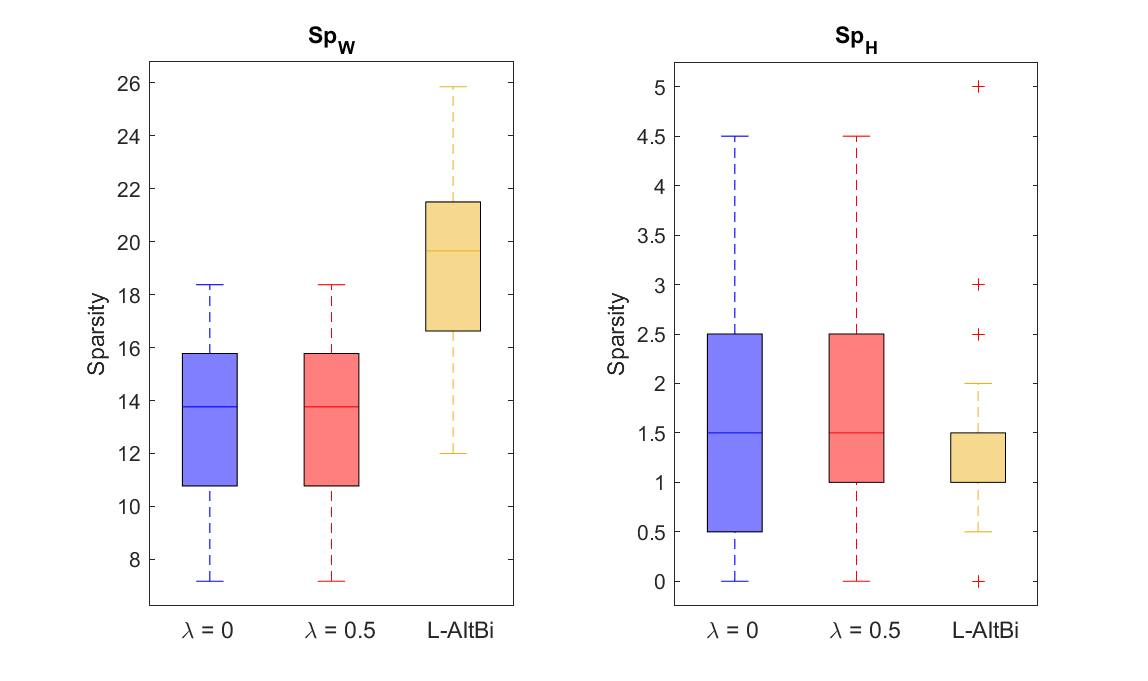}
	\caption{Statistics of the sparseness measure for $\mathbf{W}$ and $\mathbf{H}$ (Benchmark A).}\label{SPA}
\end{figure}

Please observe that optimal $\bm{\lambda}$ obtained from AltBi gives the best results either for identification and fitting problems and its choice is automatic. Figure \ref{grid} depicts the behavior of the response function for fixed values of $\lambda$ in the P-MU algorithm compared with the non-penalized MU and AltBi. AltBi shows the best performance.

\begin{figure}[ht]
    \centering
    \includegraphics[width=9cm, height=5cm]{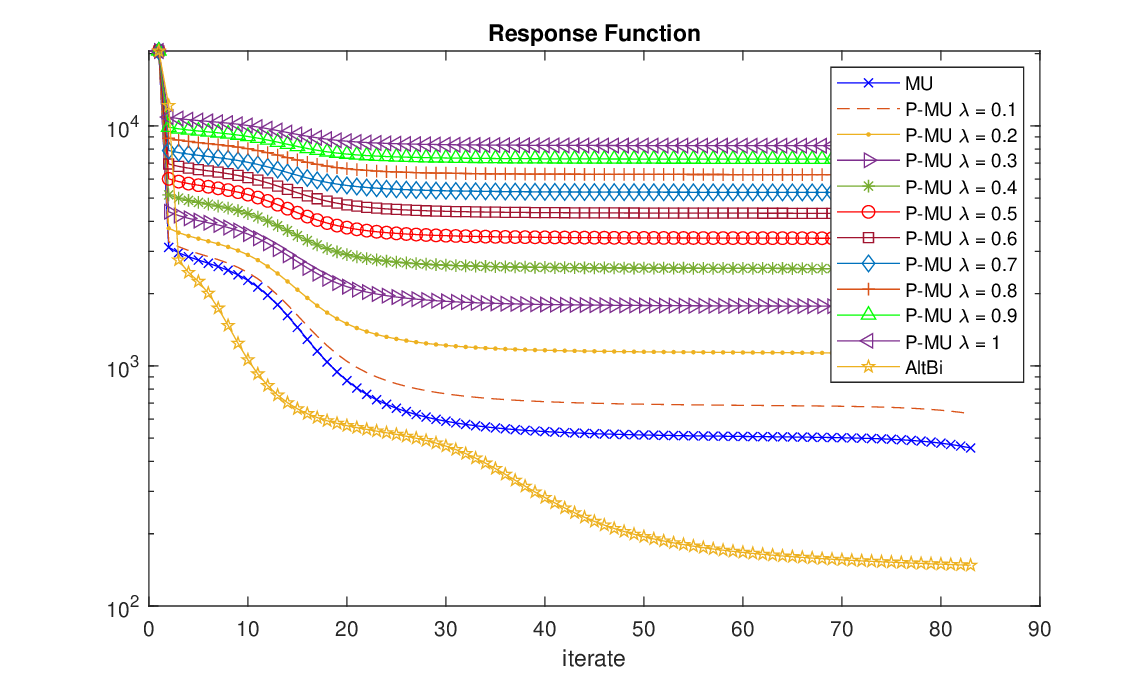}
    \caption{Response functions obtained through the P-MU algorithm with different $\lambda$ values in $\{0.1,0.2,0.3,0.4,0.5,0.6,0.7,0.8,0.9,1\}$ compared with the unpenalized MU case and AltBi.}
    \label{grid}
\end{figure}

\subsection{Results for benchmarks B, C, and D} \label{appendix2}
All the experiments confirmed the expected behavior of AltBi in terms of the identification problem. Figures \ref{sir4}, \ref{sir6}, and \ref{sir8} 
show that the SIR values obtained with AltBi are better than those obtained with MU and P-MU. 
\begin{figure}[htb]
	\centering
	\subfloat[]{\label{res4} \includegraphics[width=0.48\textwidth]{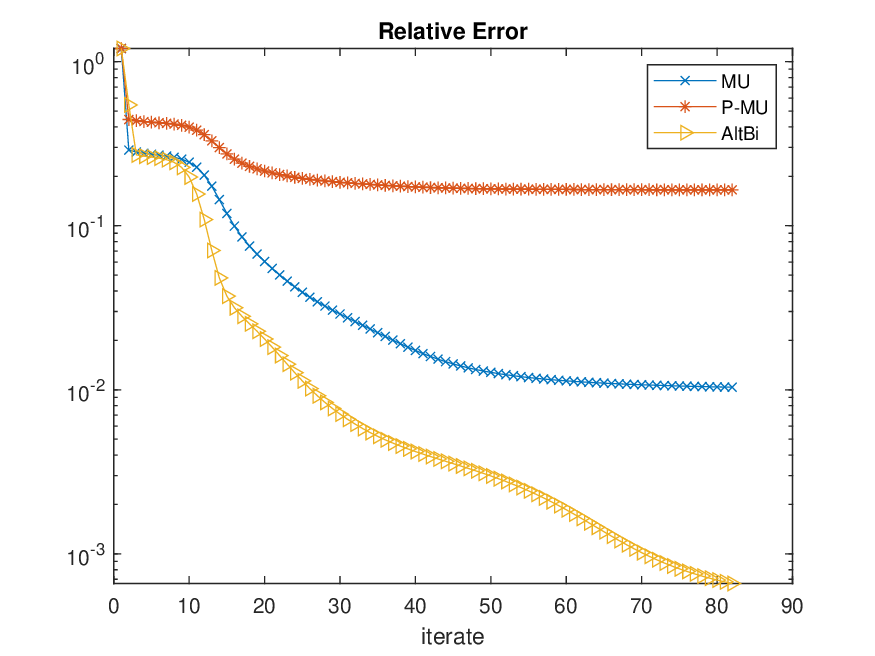}}
	\subfloat[]{\label{obj4} \includegraphics[width=0.48\textwidth]{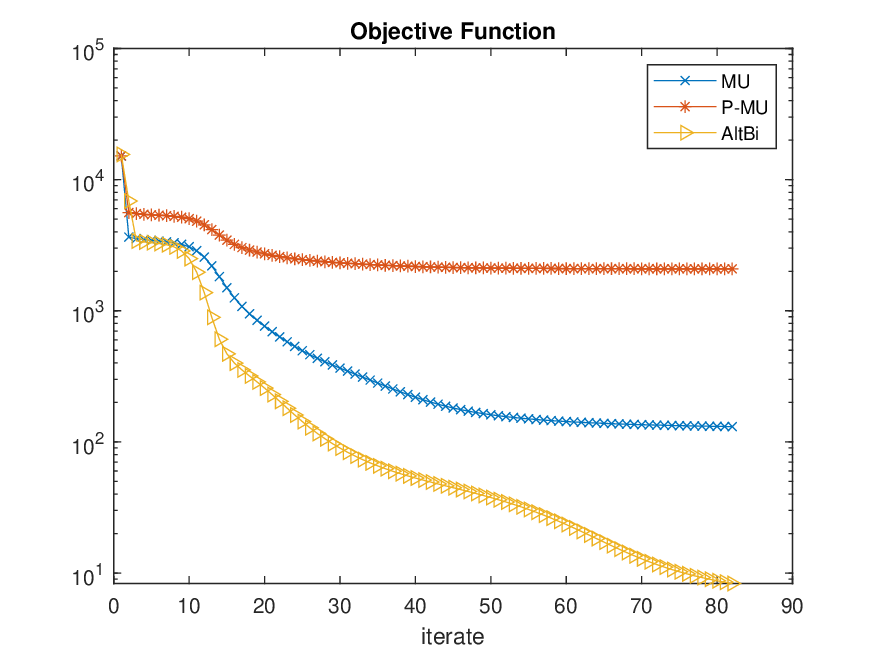}}\\
	\subfloat[]{\label{sir4} \includegraphics[width=0.48\textwidth]{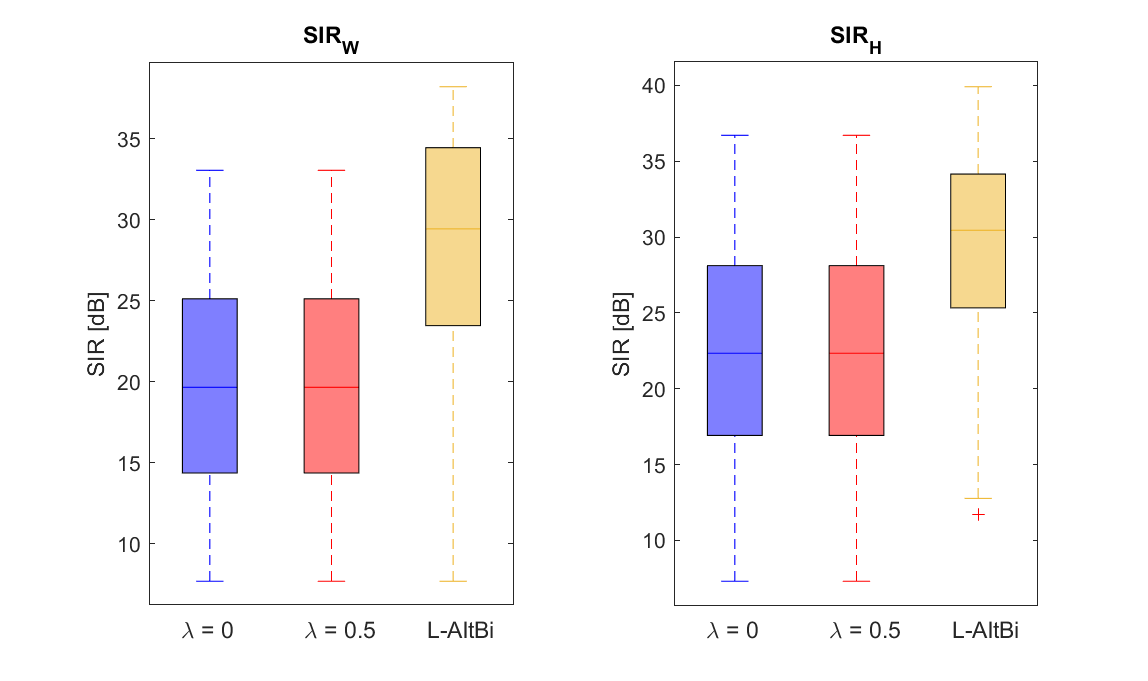}}
	\subfloat[]{\label{sp4} \includegraphics[width=0.48\textwidth]{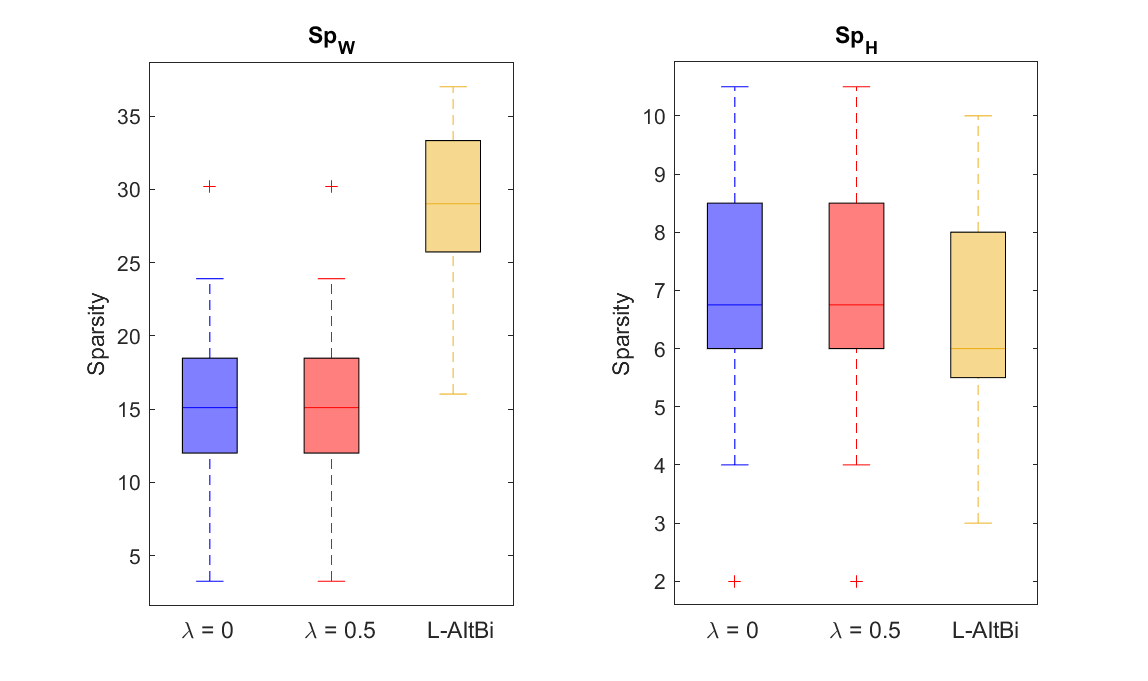}}
	\caption{$(a)$ Relative error and $(b)$ evolution of objective function with respect to iterations;  $(c)$ SIR statistics for estimating the columns of $\mathbf{W}$ and the rows of $\mathbf{H}$; $(d)$ Statistics of the sparseness measure in Benchmark B.}\label{B}
\end{figure}
\begin{figure}[ht]
	\centering
	\subfloat[]{\label{res6} \includegraphics[width=0.48\textwidth]{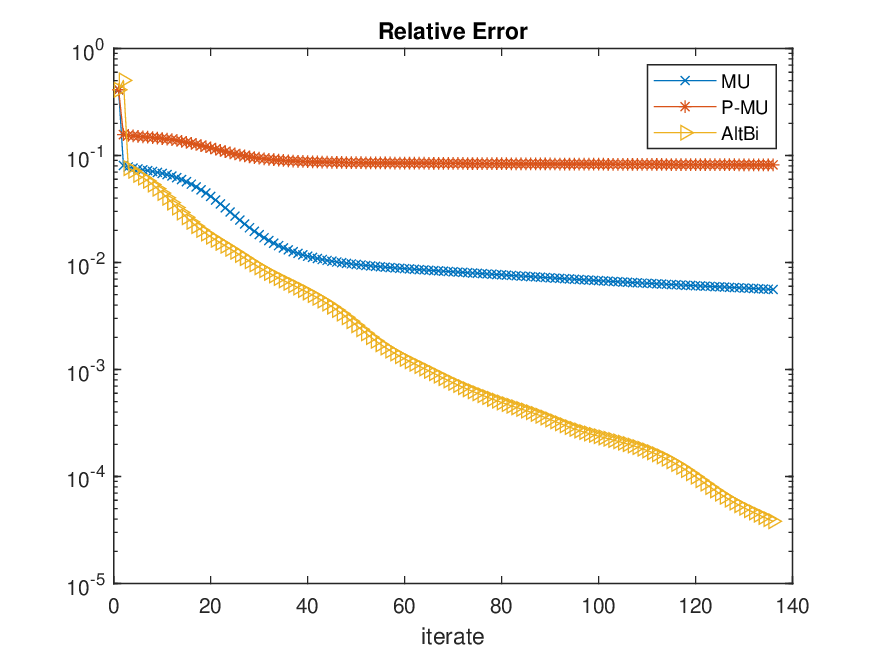}}
	\subfloat[]{\label{obj6} \includegraphics[width=0.48\textwidth]{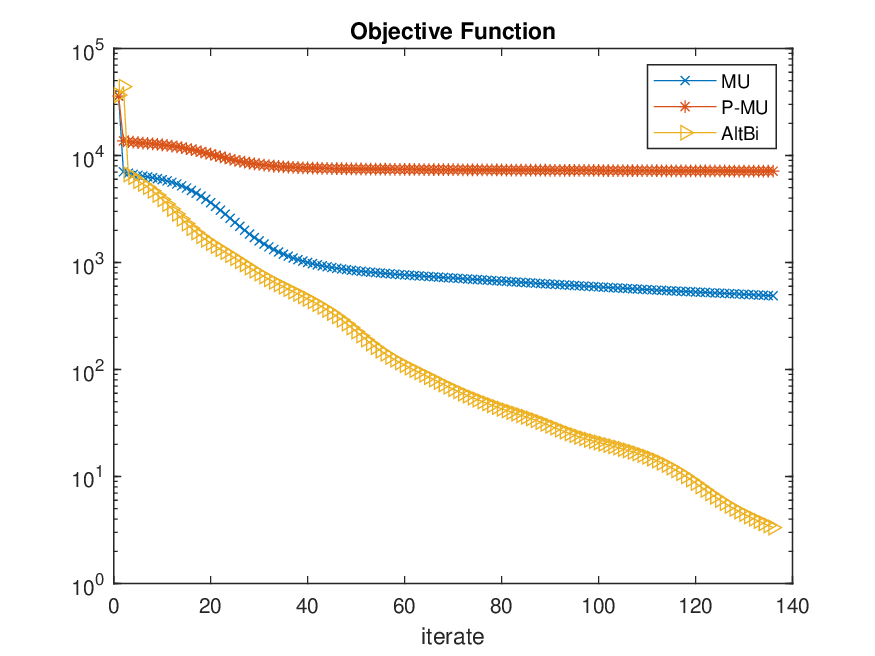}}\\
	\subfloat[]{\label{sir6} \includegraphics[width=0.48\textwidth]{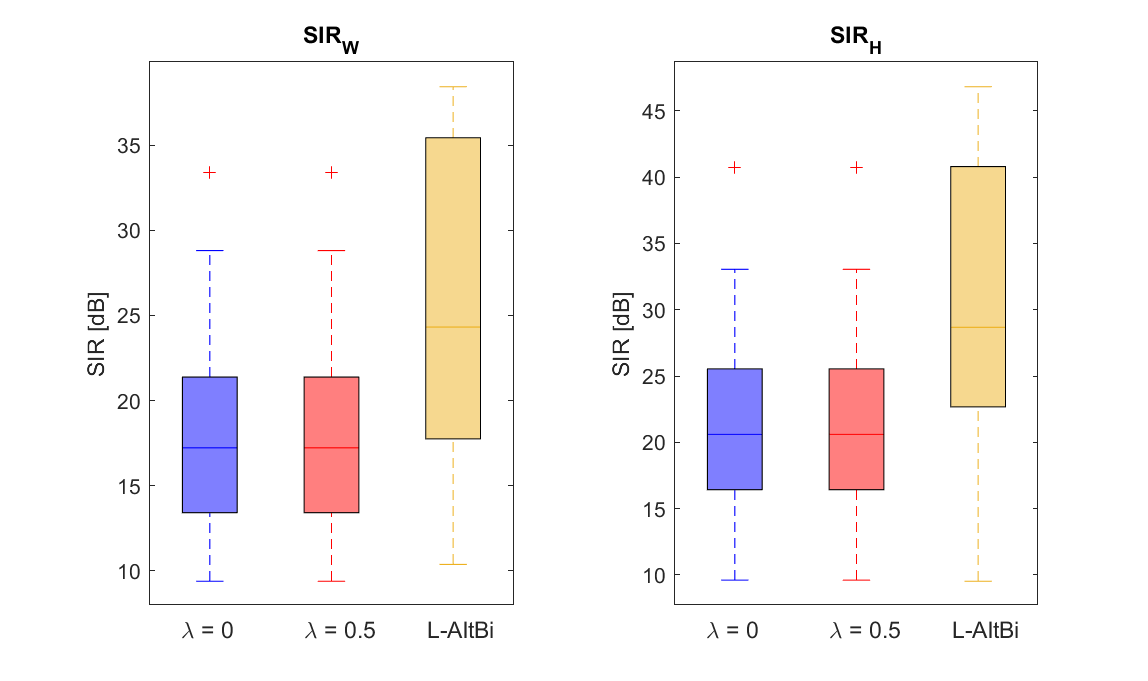}}
	\subfloat[]{\label{sp6} \includegraphics[width=0.48\textwidth]{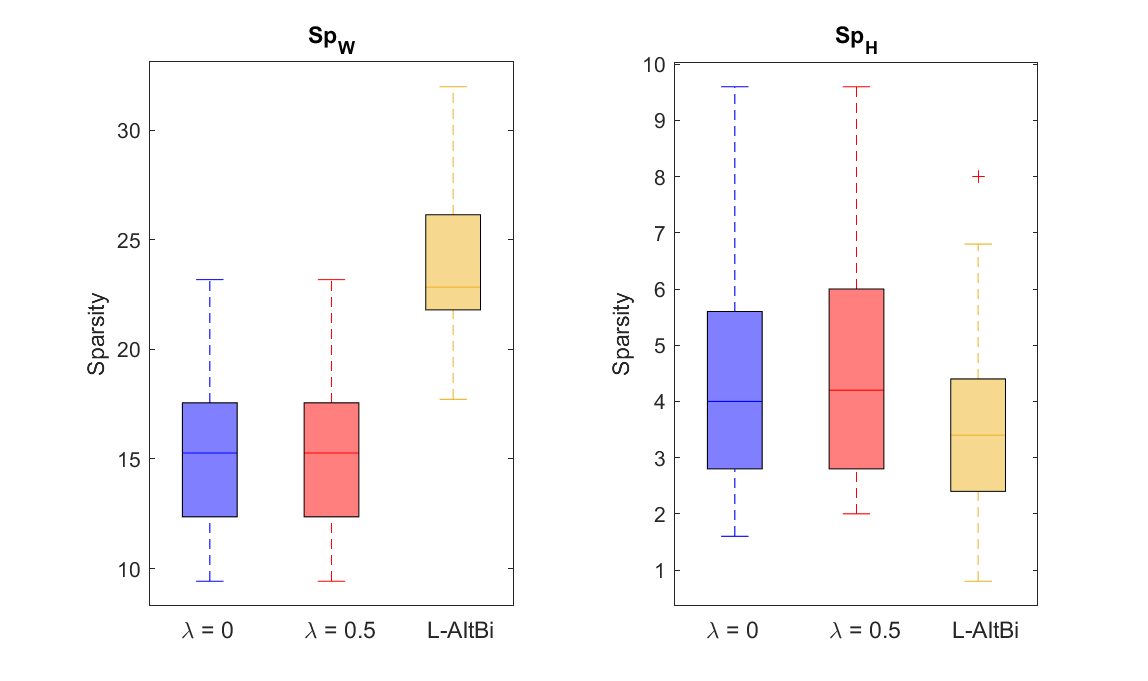}}
	\caption{$(a)$ Relative error and $(b)$  evolution of objective function with respect to iterations;  $(c)$ SIR statistics for estimating the columns of $\mathbf{W}$ and the rows of $\mathbf{H}$; $(d)$ Statistics of the sparseness measure  in Benchmark C.}\label{C}
\end{figure}
\begin{figure}[ht]
	\centering
	\subfloat[]{\label{sir8} \includegraphics[width=0.48\textwidth]{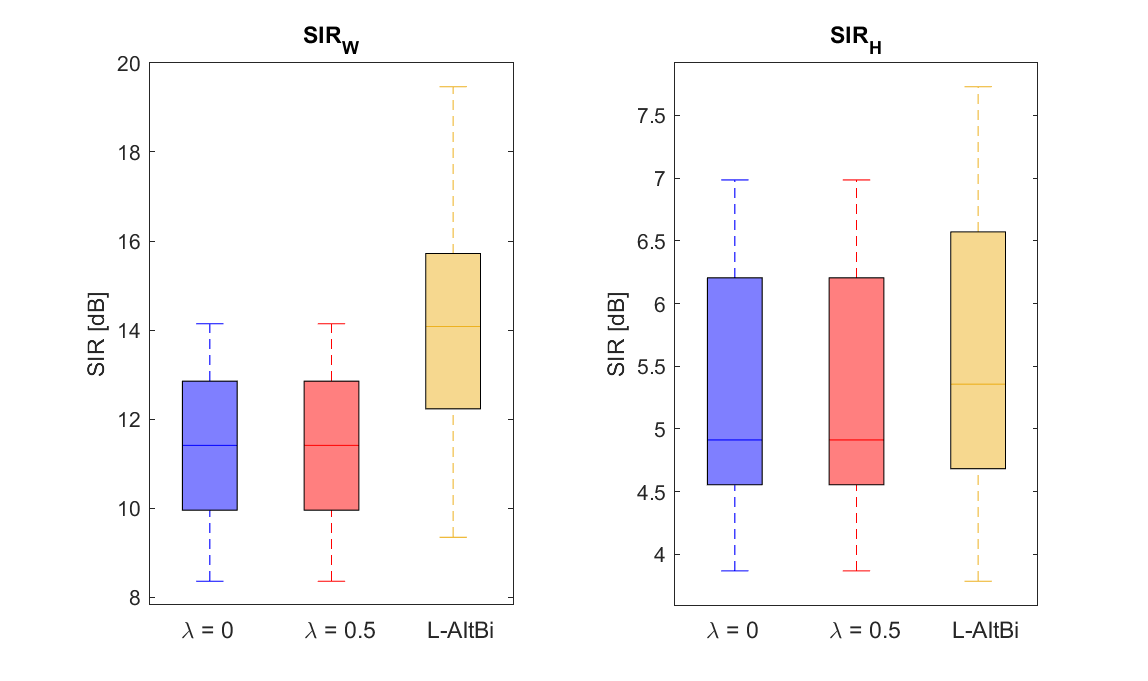}}
	\subfloat[]{\label{sp8} \includegraphics[width=0.48\textwidth]{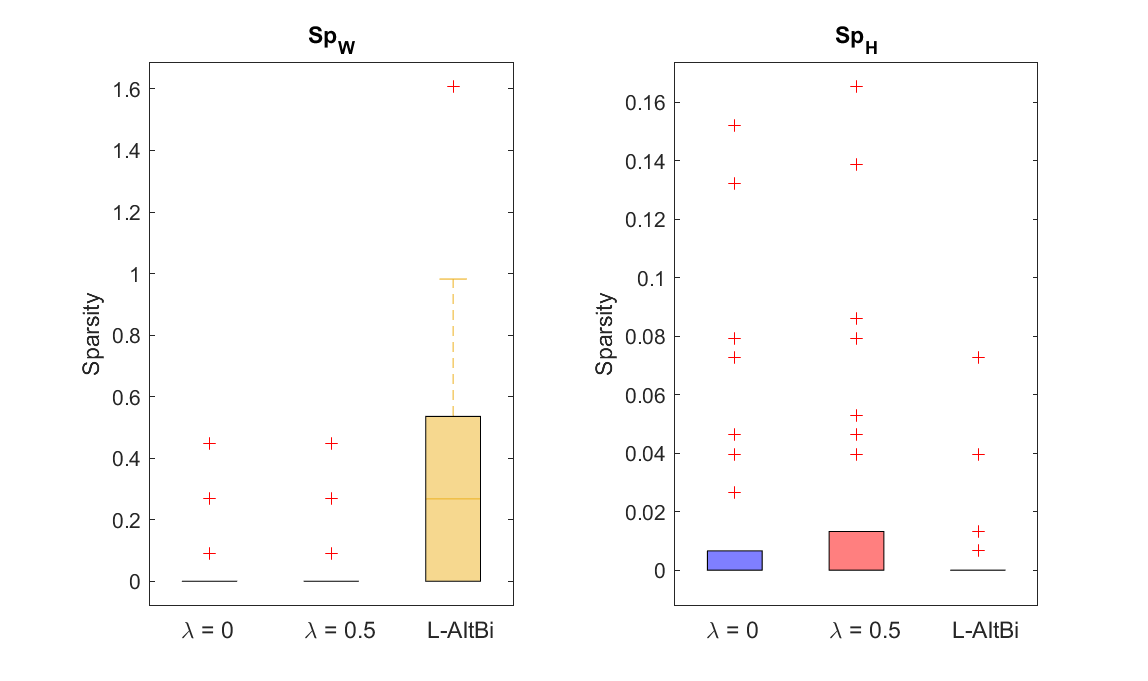}}
	\caption{SIR statistics for estimating the columns of matrix $\mathbf{W}$ (spectral signatures) and the rows of matrix $\mathbf{H}$ $(a)$; Statistics of the sparseness measure $(b)$ in Benchmark D.}\label{D}
\end{figure}
Moreover, for benchmark D, we show the original abundance maps (\ref{Initial abundance}) and spectral signatures (\ref{Initial sign}) compared to the estimated abundance maps (\ref{Final abundance}) and spectral signatures (\ref{Final sign}). Similar results are obtained for the relative error and the objective function in benchmark D, which we omit for brevity. 
The abundance maps are estimated with lower SIR performance than the spectral signatures (matrix $\mathbf{W}$). This result is not surprising: no penalty is imposed on $\mathbf{H}$. The sparsity-enforcing term was considered only for estimating matrix $\mathbf{W}$. 
\begin{figure}[ht]
	\centering
	\subfloat[Original abundance maps ]{\label{Initial abundance} \includegraphics[width=0.48\textwidth]{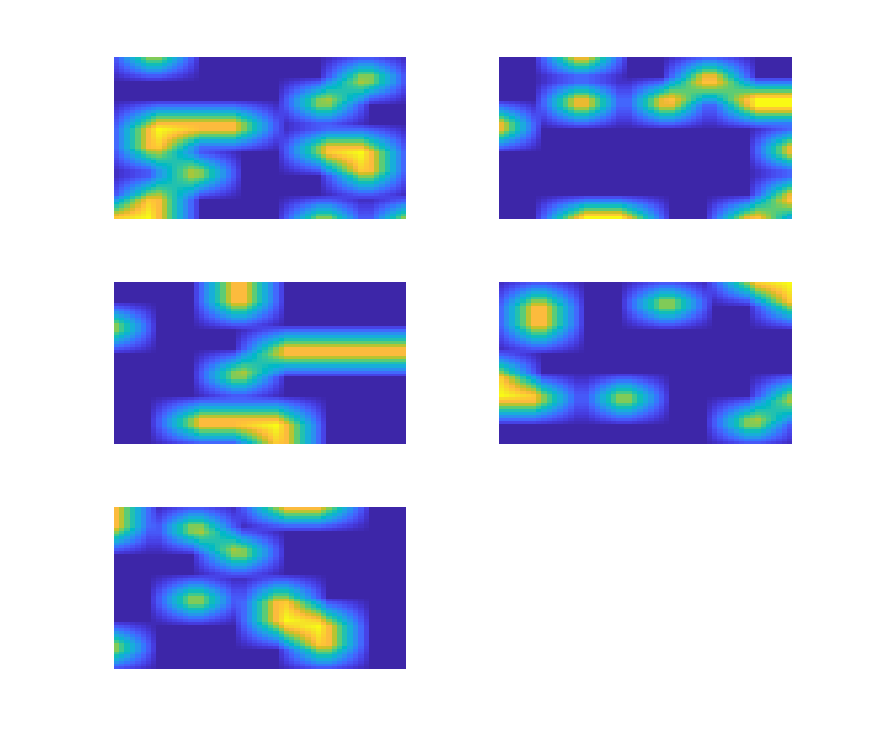}}
	\subfloat[Estimated abundance maps ]{\label{Final abundance} \includegraphics[width=0.48\textwidth]{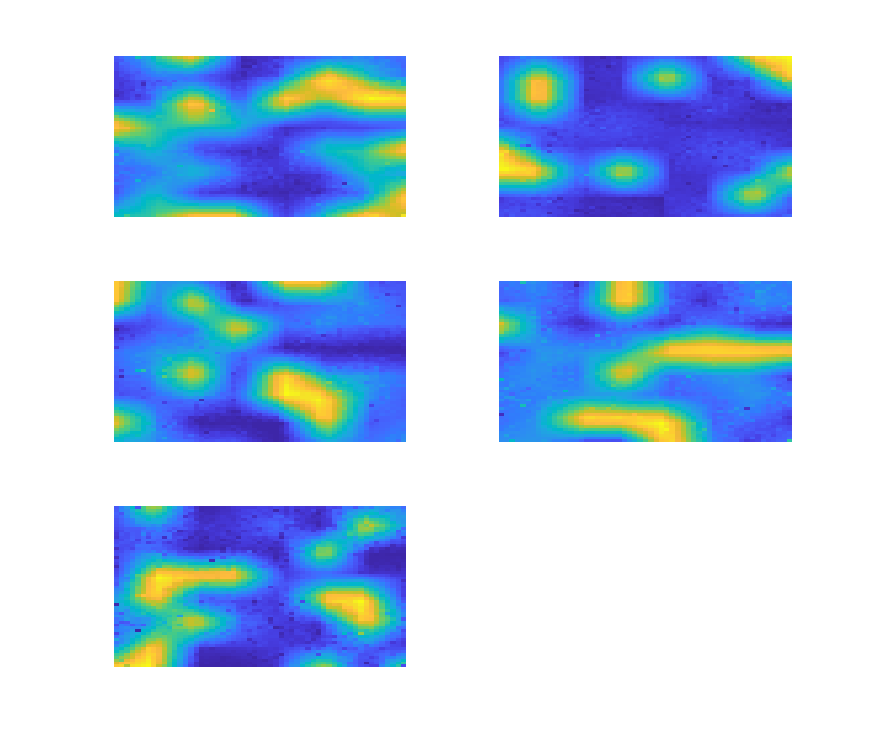}}\\
	\subfloat[Original spectral signatures ]{\label{Initial sign} \includegraphics[width=0.48\textwidth]{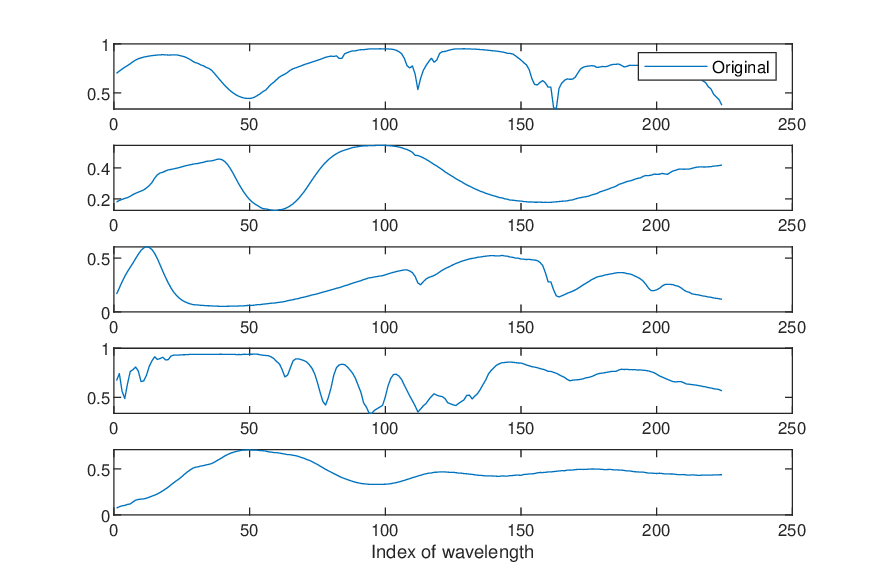}}
	\subfloat[Estimated spectral signatures ]{\label{Final sign} \includegraphics[width=0.48\textwidth]{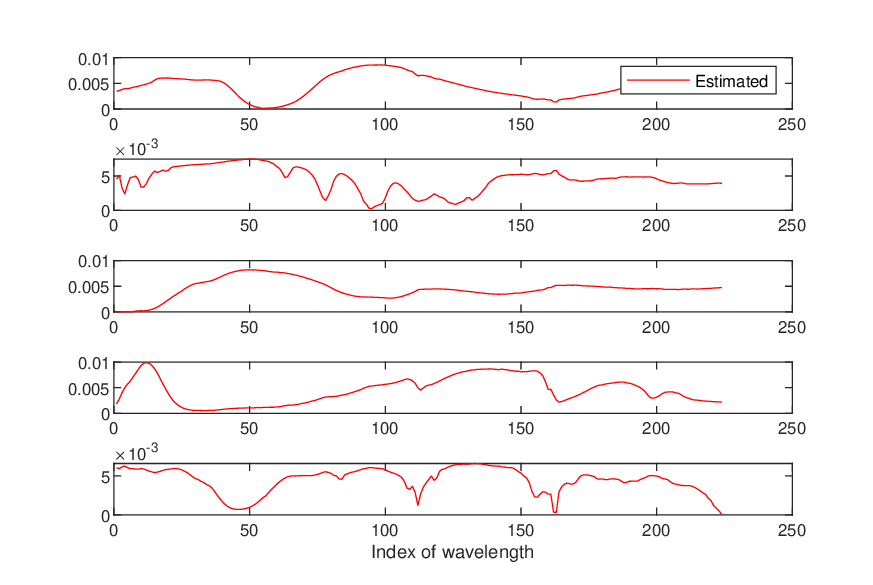}}
    \caption{Abundance maps: $(a)$ original, $(b)$ estimated with AltBi. Spectral signatures: $(c)$ original, $(d)$ estimated (in Benchmark D)}
    \label{spettral}
\end{figure}

\section{Conclusions}\label{conclusion}
We proposed the alternating HPO procedure for NMF problems which incorporates the penalty HPs into the optimization problem with the bi-level mode. We proved the existence and convergence results for the solution of the considered task and provided promising numerical experiments and comparisons.

HPO in an unsupervised scenario of data matrix factorization represents an evolving topic. However, when the size of the problem increases, the computational cost required by AltBi could not make this algorithm very competitive. To improve the computational efficiency, a column-wise version of AltBi is under study with the possibility of speeding up the algorithm by varying the length of the bunch to make the local truncation error approximately constant.

The extension of the theoretical results under hypotheses $(1)-(6)$ with no convex error and loss functions could also be considered. These aspects could accomplish this evolving topic together with the analysis of the effects made by different choices of the penalty functions on performance and computational issues for large dataset applications (such as gene expression analysis~\cite{taslaman2012framework,esposito2019orthogonal}, blind spectral unmixing~\cite{zdunek2014regularized, leplat2020multi}, and text mining).

\appendix
\section{Convergence and Correctness for the $\textbf{W}$ update in (\ref{multidivKL1})} \label{appendix}

Without loss of generality, the function in (\ref{partcase_ex}) can be rewritten neglecting constants which are not relevant to the minimization process. Thus:

\begin{equation}
\sum_{i,j}\left( -x_{ij}log\left(\sum\limits_{k=1}^r w_{ik}h_{kj}\right)+\sum\limits_{k=1}^r w_{ik}h_{kj}\right)+\sum_{i,j}\lambda_i{w_{ij}}.
\label{obj2}
\end{equation}

In particular, we theorize its element-wise update rules as:

\begin{equation}
\label{eq:upW}
w_{ia}\leftarrow w_{ia} \frac{\sum\limits_{j=1}^m{(h_{aj}x_{ij}/\sum\limits_{k=1}^r w_{ik}h_{kj})}}{\sum\limits_{j=1}^m{h_{aj}}+\lambda_i}, \quad \text{for $i=1,\dots,n$ and $a=1,\dots,r$}.
\end{equation}

Fixing the $i$-th row, let $\mathbf{w}_i \in\mathbb{R}^r$ and $ \mathbf{x}_i \in\mathbb{R}^m$ be the $i$-th rows of $\mathbf{W}$ and $\mathbf{X}$, respectively, the function in (\ref{obj2}) can be rewritten with respect to unknown $\mathbf{w}_i$ as

\begin{equation}
\mathcal{F}(\mathbf{w}_i)=\sum_{j=1}^m{-x_{ij}\log\left(\sum_{a=1}^r{w_{ia}h_{aj}}\right)}+\sum_{j=1}^m\sum_{a=1}^r{w_{ia}h_{aj}}+\lambda_i\sum_{a=1}^r{w_{ia}},
\label{eq:div}
\end{equation}

then the updates for unknown $\mathbf{w}_i$ follow from Theorem \ref{th}.

\begin{theorem}
\label{th}
The divergence in (\ref{eq:div}) is non-increasing under update rules \eqref{eq:upW}.
The divergence is invariant under these updates if and only if $\mathbf{w}_i$ is a stationary point of the divergence.
\end{theorem}
The following proof proceeds the demonstration scheme proposed by Lee and Seung~\cite{seung2001algorithms} and Liu et al~\cite{liu2003non}, but it adopts a different and more general formulation of the auxiliary function for objective function (\ref{eq:div}).

\begin{lemma}
$\mathcal{G}(\mathbf{w}_i,\mathbf{w}_i^{t})= \sum\limits_{j=1}^{m}\sum\limits_{a=1}^{r}{w_{ia}h_{aj}}$ 

\begin{align*}
&-\sum\limits_{j=1}^{m}\sum\limits_{a=1}^{r}{x_{ij}\frac{w_{ia}^th_{aj}}{\sum\limits_{b=1}^r{w_{ib}^th_{bj}}}\left(log\left(w_{ia}h_{aj}\right)-log\left(\frac{w_{ia}^th_{aj}}{\sum\limits_{b=1}^r{w_{ib}^th_{bj}}}\right)\right)} +\lambda_i\sum\limits_{a=1}^r{w_{ia}}
\end{align*}

is an auxiliary function for $\mathcal{F}(\mathbf{w}_i)$.
\end{lemma}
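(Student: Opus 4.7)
The plan is to verify the two defining properties of an auxiliary function, namely that (i) $G(\mathbf{w},\mathbf{w}^t) \geq F(\mathbf{w})$ for all admissible $\mathbf{w}$, $\mathbf{w}^t$, and (ii) $G(\mathbf{w}^t,\mathbf{w}^t) = F(\mathbf{w}^t)$. Note that $F$ and $G$ already agree on the linear terms $\sum_j\sum_a w_a H_{aj}$ and $\lambda\sum_a w_a$, so both checks reduce to comparing the single nontrivial term, i.e. $-\sum_j x_j\log\bigl(\sum_a w_a H_{aj}\bigr)$ in $F$ against its surrogate in $G$.

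First I would dispatch the tangency condition (ii) by direct substitution. Setting $\mathbf{w}=\mathbf{w}^t$ collapses $\log(w_a H_{aj}) - \log\!\bigl(w_a^t H_{aj}/\sum_b w_b^t H_{bj}\bigr)$ to $\log\!\bigl(\sum_b w_b^t H_{bj}\bigr)$, which is independent of $a$, so the inner sum over $a$ of the weights $w_a^t H_{aj}/\sum_b w_b^t H_{bj}$ telescopes to $1$, recovering exactly the term $-\sum_j x_j\log\bigl(\sum_b w_b^t H_{bj}\bigr)$ present in $F(\mathbf{w}^t)$.

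The crux is the upper bound (i). I would introduce, for each $j$, the convex combination coefficients
\begin{equation*}
\alpha_{aj} \;=\; \frac{w_a^t H_{aj}}{\sum_{b=1}^r w_b^t H_{bj}}, \qquad \alpha_{aj}\geq 0,\quad \sum_{a=1}^r \alpha_{aj}=1,
\end{equation*}
and apply Jensen's inequality to the convex function $-\log$:
\begin{equation*}
-\log\!\left(\sum_{a=1}^r w_a H_{aj}\right) \;=\; -\log\!\left(\sum_{a=1}^r \alpha_{aj}\,\frac{w_a H_{aj}}{\alpha_{aj}}\right) \;\leq\; -\sum_{a=1}^r \alpha_{aj}\log\!\left(\frac{w_a H_{aj}}{\alpha_{aj}}\right).
\end{equation*}
Expanding the right-hand side and multiplying by $x_j\geq 0$, then summing over $j$, yields exactly the log-terms appearing in $G$, so the inequality $-\sum_j x_j\log(\sum_a w_a H_{aj}) \leq G\text{-log-part}$ is obtained. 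Adding the identical linear and penalty terms on both sides gives $F(\mathbf{w})\leq G(\mathbf{w},\mathbf{w}^t)$.

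I expect no real obstacle beyond the bookkeeping of indices; the substantive ingredient is the correct choice of weights $\alpha_{aj}$ so that the term $\sum_a \alpha_{aj}\log\alpha_{aj}$ produced by Jensen's inequality matches the constant-in-$\mathbf{w}$ contribution $-\sum_j\sum_a x_j\frac{w_a^t H_{aj}}{\sum_b w_b^t H_{bj}}\log\!\bigl(\frac{w_a^t H_{aj}}{\sum_b w_b^t H_{bj}}\bigr)$ built into $G$. Once this matching is made explicit, both properties follow, and equality in Jensen's step holds precisely when $w_a H_{aj}/\alpha_{aj}$ is independent of $a$, i.e. when $\mathbf{w}$ is proportional to $\mathbf{w}^t$ on the support of $H_{:j}$, which is consistent with tangency at $\mathbf{w}=\mathbf{w}^t$.
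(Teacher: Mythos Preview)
Your proof is correct and follows essentially the same approach as the paper: introduce the convex-combination weights $\alpha_{aj}=w_a^t H_{aj}/\sum_b w_b^t H_{bj}$ and apply Jensen's inequality to $-\log$ for the majorization, with the tangency $G(\mathbf{w}^t,\mathbf{w}^t)=F(\mathbf{w}^t)$ following by direct substitution. Your write-up is in fact a bit more careful than the paper's, which (harmlessly) writes $\sum_j\sum_a\alpha_{aj}=1$ where $\sum_a\alpha_{aj}=1$ for each $j$ is what is actually used.
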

\proof
We prove that $\mathcal{G}(\mathbf{w}_i,\mathbf{w}_i^t)$ is an auxiliary function for $\mathcal{F}(\mathbf{w}_i)$.
Due to the basic proprieties of the logarithmic function, the condition $\mathcal{G}(\mathbf{w}_i,\mathbf{w}_i)=\mathcal{F}(\mathbf{w}_i)$ is straightforward.
To prove that $\mathcal{G}(\mathbf{w}_i,\mathbf{w}_i^t)\geq \mathcal{F}(\mathbf{w}_i)$, we consider the quantity

\begin{equation}
\alpha_{aj}=\frac{w_{ia}^t h_{aj}}{\sum_b w_{ib}^t h_{bj}}\quad with\quad \sum_j\sum_a{\alpha_{aj}}=1.
\end{equation}

Due to the convexity of the logarithmic function, the inequality

\begin{equation}
\sum_j{x_{ij}\log{\sum_a{w_{ia}h_{aj}}}} -\sum_{j}\sum_{a}{x_{ij}\alpha_{aj}\log\left({\frac{w_{ia}H_{aj}}{\alpha_{aj}}}\right)}\geq0
\end{equation}

holds, so that the proof follows. 
\endproof

\begin{lemma}\label{lemma1} Objective function $\mathcal{F}$ is non-increasing when its auxiliary function is minimized.
\end{lemma}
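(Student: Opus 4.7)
The plan is to exploit the two defining properties of the auxiliary function $G$ established in the preceding lemma: the majorization property $G(\mathbf{w}, \mathbf{w}^t) \geq F(\mathbf{w})$ and the tangency property $G(\mathbf{w}, \mathbf{w}) = F(\mathbf{w})$. Define the iterate
\begin{equation*}
\mathbf{w}^{t+1} := \argmin_{\mathbf{w} \geq 0} G(\mathbf{w}, \mathbf{w}^t),
\end{equation*}
and then invoke the standard majorization–minimization chain
\begin{equation*}
F(\mathbf{w}^{t+1}) \;\leq\; G(\mathbf{w}^{t+1}, \mathbf{w}^t) \;\leq\; G(\mathbf{w}^t, \mathbf{w}^t) \;=\; F(\mathbf{w}^t),
\end{equation*}
which is precisely the non-increasing statement.

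The remaining substantive task is to verify that the minimizer of $G(\cdot, \mathbf{w}^t)$ coincides with the multiplicative update rule (\ref{eq:upW}). First I would observe that $G(\cdot, \mathbf{w}^t)$ is strictly convex on the positive orthant: the terms $\sum_j \sum_a w_a H_{aj}$ and $\lambda \sum_a w_a$ are linear, while $-\sum_{j,a} x_j \frac{w_a^t H_{aj}}{\sum_b w_b^t H_{bj}} \log(w_a H_{aj})$ is a sum of strictly convex functions of the individual variables $w_a$ (since $-\log$ is strictly convex). Consequently the minimizer is unique and separates across the coordinates $w_a$, and can be located by setting $\partial G / \partial w_a = 0$. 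A direct computation gives
\begin{equation*}
\sum_{j=1}^m H_{aj} \;-\; \frac{1}{w_a}\sum_{j=1}^m x_j\,\frac{w_a^t H_{aj}}{\sum_b w_b^t H_{bj}} \;+\; \lambda \;=\; 0,
\end{equation*}
and solving for $w_a$ yields exactly
\begin{equation*}
w_a \;=\; w_a^t\,\frac{\sum_{k=1}^m H_{ak} x_k / \sum_b w_b^t H_{bk}}{\sum_{k=1}^m H_{ak} + \lambda},
\end{equation*}
which is the update (\ref{eq:upW}).

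The main obstacle in this plan is really just this algebraic step: correctly treating $\mathbf{w}^t$ as a constant when differentiating (so that $\alpha_{aj} = w_a^t H_{aj}/\sum_b w_b^t H_{bj}$ drops out of the gradient as a coefficient), and reorganizing the first-order condition into the characteristic multiplicative form $w_a^t \cdot (\text{numerator})/(\text{denominator})$. Everything else is a direct consequence of the majorization–minimization template and the lemma just proved.

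Finally, for the invariance clause implicit in Theorem \ref{th}, I would argue as follows: equality $F(\mathbf{w}^{t+1}) = F(\mathbf{w}^t)$ forces both inequalities above to be equalities, so in particular $G(\mathbf{w}^{t+1}, \mathbf{w}^t) = G(\mathbf{w}^t, \mathbf{w}^t)$, and by the strict convexity and uniqueness of the minimizer of $G(\cdot, \mathbf{w}^t)$ we must have $\mathbf{w}^{t+1} = \mathbf{w}^t$. Plugging this fixed-point condition into (\ref{eq:upW}) then gives $w_a \big(\sum_j H_{aj} + \lambda - \sum_j H_{aj} x_j/(\sum_b w_b H_{bj})\big) = 0$, which is precisely the KKT stationarity condition for $F$ on the nonnegative orthant.
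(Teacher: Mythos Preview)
Your proposal is correct and follows essentially the same approach as the paper: differentiate $G(\cdot,\mathbf{w}^t)$ with respect to $w_a$, set the result to zero, and solve to recover the multiplicative update \eqref{eq:upW}. Your write-up is in fact more complete than the paper's, since you make the majorization--minimization chain $F(\mathbf{w}^{t+1}) \leq G(\mathbf{w}^{t+1},\mathbf{w}^t) \leq G(\mathbf{w}^t,\mathbf{w}^t) = F(\mathbf{w}^t)$ explicit and justify strict convexity, whereas the paper only records the first-order condition (and with an apparent typo, writing $1/H_{aj}$ where $1/w_a$ is meant).
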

\proof
The minimum value of $\mathcal{G}(\mathbf{w}_i,\mathbf{w}_i^t)$ with respect to $\mathbf{w}_i$ satisfies 

\begin{equation}
\frac{d \mathcal{G}(\mathbf{w}_i,\mathbf{w}_i^t)}{dw_{ia}}=\sum_j{h_{aj}}-\sum_j{x_{ij}\frac{w_{ia}^t h_{aj}}{\sum_b{w_{ib}^t h_{bj}}}\left(\frac{1}{h_{aj}}\right)+\lambda_i}=0.
\end{equation}

Thus, the update rule is \eqref{eq:upW}.
\endproof
According to this new update, the KKT conditions with respect to the nonnegative constraints are:

\begin{equation}
\begin{cases}
\mathbf{W}.*\nabla_{\mathbf{W}}\mathcal{F}(\mathbf{W},\mathbf{H})=0,\\
\nabla_{\mathbf{W}}\mathcal{F}(\mathbf{W,H})\geq 0,\\
\mathbf{W}\geq 0,\\
\end{cases}
\label{KKT}
\end{equation}

where $.*$ is the Hadamard pointwise product and $\nabla_{\mathbf{W}}$ 
is the gradient of (\ref{obj2}). This formulation allows to prove that update (\ref{eq:upW}) satisfies KKT conditions (\ref{KKT}) at the convergence, then its correctness is ensured.

\section*{Acknowledgments}
We would like to thank Prof. N. Gillis from University of Mons and Prof. C. Kervazo from Télécom Paris Institut Polytechnique de Paris for their remarks during the debate on the optimization problem.\\ 
N. D. B., F. E., and L.S. were supported in part by the GNCS-INDAM (Gruppo Nazionale per il Calcolo Scientifico of Istituto Nazionale di Alta Matematica) Francesco Severi, P.le Aldo Moro, Roma, Italy.\\ 
F.E. is supported by REFIN Project 363BB1F4.
\\This research did not receive any specific grant from funding agencies in the public, commercial, or not-for-profit sectors.\\
\textbf{Declarations of interest}: none.\\
\textbf{Authors contributions:} All authors equally contribute to this work.


\end{document}